
\documentclass[reqno]{amsart}

\usepackage{amscd,amsmath,amssymb,amsfonts,bm}
\usepackage{stmaryrd}
\usepackage{bbold}
\usepackage{times}
\usepackage[all]{xy}
\usepackage{mathrsfs}
\usepackage{graphicx}

\usepackage{amsthm}

\textwidth=34pc
\oddsidemargin=30pt
\evensidemargin=30pt


\theoremstyle{plain}
\newtheorem{thm}{Theorem}
\newtheorem{lem}[thm]{Lemma}
\newtheorem{cor}[thm]{Corollary}
\newtheorem{prop}[thm]{Proposition}

\theoremstyle{definition}

\newtheorem*{claim}{Claim}

\newtheorem*{ack}{Acknowledgment}
\newtheorem*{notat}{Notations}

\theoremstyle{remark}


\newcommand{\Ker}{\operatorname{Ker}}
\renewcommand{\Im}{\operatorname{Im}}

\newcommand{\End}{\operatorname{End}}
\newcommand{\Aut}{\operatorname{Aut}}

\newcommand{\Spec}{\operatorname{Spec}}
\newcommand{\Fract}{\operatorname{Fract}}

\newcommand{\GL}{\operatorname{GL}}

\newcommand{\Id}{\operatorname{Id}}

\newcommand{\ch}{\operatorname{char}}

\newcommand{\onto}{\twoheadrightarrow}

\newcommand{\into}{\hookrightarrow}
\newcommand{\tto}{\longrightarrow}
\newcommand{\iso}{\stackrel{\sim}{\tto}}

\renewcommand{\bar}[1]{\overline{#1}}

\newcommand{\til}[1]{\widetilde{#1}}

\newcommand{\Mat}{\operatorname{M}}

\DeclareMathOperator{\Rat}{Rat}

\renewcommand{\k}{\mathbb{k}}
\renewcommand{\phi}{\varphi}

\newcommand{\GSpec}{G\text{-}\!\Spec}
\newcommand{\NSpec}{N\text{-}\!\Spec}
\newcommand{\GRat}{G\text{-}\!\Rat}

\newcommand{\cO}{\mathcal{O}}

\newcommand{\cC}{\mathcal{C}}

\newcommand{\fp}{\mathfrak{p}}

\newcommand{\fb}{\mathfrak{b}}

\newcommand{\Gm}{\mathbb{G}_{\text{\rm m}}}

\newcommand{\ZZ}{\mathbb{Z}}

\newcommand{\cen}{\mathcal{Z}}

\newcommand{\bq}{\mathbf{q}}
\newcommand{\bfm}{\mathbf{m}}

\newcommand{\bV}{\mathbf{V}}
\newcommand{\bI}{\mathbf{I}}
\newcommand{\bga}{\gamma'}
\newcommand{\bgr}{\bga_{\text{rat}}}
\newcommand{\gar}{\gamma_{\text{rat}}}
\newcommand{\pir}{\pi_{\text{rat}}}

\newcommand{\byG}{\!\!:\!\! G}
\newcommand{\byN}{\!\!:\!\! N}
\newcommand{\byH}{\!\!:\!\! H}
\newcommand{\byGa}{\!\!:\!\! \Gamma}

\newcommand{\upin}{\text{\ \rotatebox{90}{$\in$}\ }}
\newdir{ >}{{}*!/-5pt/\dir{>}}


\begin{document}

%
%

\title[Algebraic group actions on noncommutative spectra]%
{Algebraic group actions on noncommutative spectra}

\title[Algebraic group actions on noncommutative spectra]%
{Algebraic group actions on noncommutative spectra}

\author{Martin Lorenz}

\address{Department of Mathematics, Temple University,
    Philadelphia, PA 19122}

\email{lorenz@temple.edu}

\urladdr{http://www.math.temple.edu/$\stackrel{\sim}{\phantom{.}}$lorenz}

\thanks{Research of the author supported in part by NSA Grant H98230-07-1-0008}

\subjclass[2000]{Primary 16W22; Secondary 16W35, 17B37, 20G42}

\keywords{algebraic group, rational action, orbit, prime ideal,
rational ideal, primitive ideal, Jacobson-Zariski topology, locally
closed subset, stratification}

\maketitle

\begin{abstract}
Let $G$ be an affine algebraic group and let $R$ be an associative
algebra with a rational action of $G$ by algebra automorphisms. We
study the induced $G$-action on the set $\Spec R$ of all prime
ideals of $R$, viewed as a topological space with the
Jacobson-Zariski topology, and on the subspace $\Rat R \subseteq
\Spec R$ consisting of all rational ideals of $R$. Here, a prime
ideal $P$ of $R$ is said to be rational if the extended centroid
$\cC(R/P)$ is equal to the base field. Our results generalize work
of M{\oe}glin \& Rentschler and Vonessen to arbitrary associative
algebras while also simplifying some of the earlier proofs.

The map $P \mapsto  \bigcap_{g \in G} g.P$ gives a surjection from
$\Spec R$ onto the set $\GSpec R$ of all $G$-prime ideals of $R$.
The fibres of this map yield the so-called $G$-stratification of
$\Spec R$ which has played a central role in the recent
investigation of algebraic quantum groups, in particular in the work
of Goodearl and Letzter. We describe the $G$-strata of $\Spec R$ in
terms of certain commutative spectra. Furthermore, we show that if a
rational ideal $P$ is locally closed in $\Spec R$ then the orbit
$G.P$ is locally closed in $\Rat R$. This generalizes a standard
result on $G$-varieties. Finally, we discuss the situation where
$\GSpec R$ is a finite set.
\end{abstract}



\section*{Introduction}

\subsection{} \label{SS:setting}
This article continues our investigation \cite{mL08} of the action
of an affine algebraic group $G$ on an arbitrary associative algebra
$R$. Our focus will now be on some topological aspects of the
induced action on the set $\Spec R$ of all prime ideals of $R$, the
main themes being local closedness of $G$-orbits in $\Spec R$  and
the stratification of $\Spec R$ by means of suitable commutative
spectra. The stratification in question plays a central role in the
theory of algebraic quantum groups; see Brown and Goodearl
\cite{kBkG02} for a panoramic view of this area. Our goal here is to
develop the principal results in a context that is free of the
standard finiteness conditions, noetherianness or the Goldie
property, that underlie the pioneering works of M{\oe}glin and
Rentschler \cite{cMrR81}, \cite{cMrR84}, \cite{cMrRxx},
\cite{cMrR86} and Vonessen \cite{nVE96}, \cite{nVE98}.

Throughout, we work over an algebraically closed base field $\k$ and
we assume that the action of $G$ on $R$ is rational; the definition
will be recalled in \S\ref{SS:rationalaction}. The action will
generally be written as
\begin{equation*} \label{E:Gaction0}
G \times R \to R\,,\quad (g,r) \mapsto g.r\ .
\end{equation*}

\subsection{} \label{SS:spec}
The set $\Spec R$ carries the familiar Jacobson-Zariski topology;
see \S\ref{SS:topology} for details. Since the $G$-action on $R$
sends prime ideals to prime ideals, it induces an action of $G$ by
homeomorphisms on $\Spec R$. In the following, $G \backslash \Spec
R$ will denote the set of all $G$-orbits in $\Spec R$. We will also
consider the set $\GSpec R$ consisting of all $G$-prime ideals of
$R$. Recall that a proper $G$-stable ideal $I$ of $R$ is called
\emph{$G$-prime} if $AB \subseteq I$ for $G$-stable ideals $A$ and
$B$ of $R$ implies that $A \subseteq I$ or $B \subseteq I$.

There are surjective maps
\begin{align}
\Spec R &\stackrel{\text{can.}}{\tto} G \backslash \Spec R \ , &
P &\mapsto G.P =  \{ g.P \mid g \in G \}  \label{E:spec1} \\
\gamma \colon \Spec R &\tto \GSpec R \ , & P &\mapsto P\byG =
\bigcap_{g \in G} g.P\ . \label{E:spec}
\end{align}
See \cite[Proposition 8]{mL08} for surjectivity of $\gamma$. We will
give $G \backslash \Spec R$ and $\GSpec R$ the final topologies for
these maps: closed subsets are those whose preimage in $\Spec R$ is
closed \cite[I.2.4]{nB71}. Since \eqref{E:spec} factors through
\eqref{E:spec1}, we obtain a surjection
\begin{equation} \label{E:spec2}
G \backslash \Spec R \tto \GSpec R \ , \qquad G.P \mapsto P\byG \ .
\end{equation}
This map is continuous and closed; see \S\ref{SS:topology2}.

\subsection{} \label{SS:rat}
As in \cite{mL08}, we will be particularly concerned with the
subsets $\Rat R \subseteq \Spec R$ and $\GRat R \subseteq \GSpec R$
consisting of all rational and $G$-rational ideals of $R$,
respectively. Recall that rationality and $G$-rationality is defined
in in terms of the extended centroid $\cC(\,.\,)$ of the
corresponding factor algebra \cite{mL08}. Specifically,  $P \in
\Spec R$ is said to be \emph{rational} if $\cC(R/P) = \k$,  and $I
\in \GSpec R$ is \emph{$G$-rational} if the $G$-invariants
$\cC(R/I)^G \subseteq \cC(R/I)$ coincide with $\k$. For the
definition and basic properties of the extended centroid, the reader
is referred to \cite{mL08}. Here, we just recall that $\cC(R/P)$ and
$\cC(R/I)^G$ always are extension fields of $\k$, for any $P \in
\Spec R$ and any $I \in \GSpec R$. The extended centroid of a
semiprime noetherian (or Goldie) algebra is identical to the center
of the classical ring of quotients. In the context of enveloping
algebras of Lie algebras and related noetherian algebras, the field
$\cC(R/P)$ is commonly called the \emph{heart} (c{\oe}ur, Herz) of
the prime $P$ (e.g., \cite{pG71}, \cite{BGR73}, \cite{wBrR06}). We
will follow this tradition here.

The sets $\Rat R$ and $\GRat R$ will be viewed as topological spaces
with the topologies that are induced from $\Spec R$ and $\GSpec R$:
closed subsets of $\Rat R$ are the intersections of closed subsets
of $\Spec R$ with $\Rat R$, and similarly for $\GRat R$
\cite[I.3.1]{nB71}. The $G$-action on $\Spec R$ stabilizes $\Rat R$.
Hence we may consider the set $G \backslash \Rat R \subseteq G
\backslash \Spec R$ consisting of all $G$-orbits in $\Rat R$. We
endow $G \backslash \Rat R$ with the topology that is induced from
$G \backslash \Spec R$; this turns out to be indentical to the final
topology for the canonical surjection $\Rat R \tto G \backslash \Rat
R$ \cite[III.2.4, Prop.~10]{nB71}. By \cite[Theorem 1]{mL08}, the
surjection \eqref{E:spec2} restricts to a bijection
\begin{equation} \label{E:rat}
G \backslash \Rat R \stackrel{\text{bij.}}{\tto} \GRat R \ .
\end{equation}
This map is in fact a homeomorphism; see \S\ref{SS:ratspace}.

\subsection{}
The following diagram summarizes the various topological spaces
under consideration and their relations to each other.
%
%
\begin{equation*} \label{E:spaces}
\xymatrix{%
& & \Spec R \ar@{>>}[ddll]_{\text{can.}} \ar@{>>}[ddrr]^{\gamma} \\
& & \Rat R \ar@{>>}[ddll]|!{[dll];[drr]}\hole \ar@{>>}[ddrr]|!{[dll];[drr]}\hole \ar@{_{(}->}[u] \\
G  \backslash \Spec R \ar@{>>}[rrrr] & & & &  \GSpec R \\
G  \backslash \Rat R \ar[rrrr]_{\cong} \ar@{_{(}->}[u] & & & & \GRat
R\ar@{_{(}->}[u] }
\end{equation*}
Here, $\onto$ indicates a surjection whose target space carries the
final topology, $\into$ indicates an inclusion whose source has the
induced topology, and $\cong$ is the  homeomorphism \eqref{E:rat}.

\subsection{}
The technical core of the article is Theorem~\ref{T:fibre} which
describes the $\gamma$-fibre over a given $I \in \GSpec R$. This
fibre will be denoted by
\begin{equation*} 
\Spec_IR = \{ P \in \Spec R \mid P\byG = I \}
\end{equation*}
as in \cite{kBkG02}. The partition
\begin{equation} \label{E:Gstrat}
\Spec R = \bigsqcup_{I \in \GSpec R} \Spec_IR
\end{equation}
is called the \emph{$G$-stratification} of $\Spec R$ in
\cite[II.2]{kBkG02}. In the special case where $R$ is noetherian and
$G$ is an algebraic torus, a description of the $G$-strata
$\Spec_IR$ in terms of suitable commutative spectra was given in
\cite[II.2.13]{kBkG02}, based on work of Goodearl and Letzter
\cite{kGeL00}. For general $R$ and $G$, the intersection
\begin{equation*}
\Rat_IR = \Spec_I{R}\, \cap\, \Rat R
\end{equation*}
was treated in \cite[Theorem 22]{mL08}. Our proof of
Theorem~\ref{T:fibre}, to be given in Section~\ref{S:fibres},
elaborates on the one in \cite{mL08}.

Assuming $G$ to be connected for simplicity, we put
$$
T_I = \cC(R/I) \otimes_{\k} \k(G) \ ,
$$
where $\k(G)$ denotes the field of rational functions on $G$. The
algebra $T_I$ is a tensor product of two commutative fields and
$T_I$ has no zero divisors. The given $G$-action on $R$ and the
right regular $G$-action on $\k(G)$ naturally give rise to an action
of $G$ on $T_I$. Letting  $\Spec^G(T_I)$ denote the collection of
all $G$-stable primes of $T_I$, Theorem~\ref{T:fibre} establishes a
bijection
$$
c \colon \Spec_IR \stackrel{\text{bij.}}{\tto} \Spec^G(T_I)
$$
which is very well behaved: the map $c$ is equivariant with respect
to suitable $G$-actions, it is an order isomorphism for inclusion,
and it allows one to control hearts and rationality. For the precise
formulation of Theorem~\ref{T:fibre}, we refer to
Section~\ref{S:fibres}.

\subsection{}
Theorem~\ref{T:fibre} and the tools developed for its proof will be
used in Section~\ref{S:proof} to investigate local closedness of
rational ideals. Recall that a subset $A$ of an arbitrary
topological space $X$ is said to be \emph{locally closed} if $A$ is
closed in some neighborhood of $A$ in $X$. This is equivalent to $A$
being open in its topological closure $\bar{A}$ in $X$ or,
alternatively, $A$ being an intersection of an open and a closed
subset of $X$ \cite[I.3.3]{nB71}. A point $x \in X$ is locally
closed if $\{ x \}$ is locally closed. For $X = \Spec R$, this
amounts to the following familiar condition: a prime ideal $P$ is
locally closed in $\Spec R$ if and only if $P$ is distinct from the
intersection of all primes of $R$ that properly contain $P$. A
similar formulation holds for $X = \GSpec R$\,; see
\S\ref{SS:locclosed}. We remark that ``locally closed in $\GSpec
R$'' is referred to as ``$G$-locally closed'' in \cite{cMrRxx} and
\cite{nVE98}.

The second main result of this article is the following theorem
which will be proved in Section~\ref{S:proof}. Earlier versions
assuming additional finiteness hypotheses are due to M{\oe}glin and
Rentschler \cite[Th{\'e}or{\`e}me 3.8]{cMrR81},
\cite[Th{\'e}or{\`e}me 3]{cMrRxx} and to Vonessen \cite[Theorem
2.6]{nVE98}.

\begin{thm} \label{T:orbit}
The following are equivalent for a rational ideal $P$ of $R$:
\begin{enumerate}
\item $P$ is locally closed in $\Spec R$;
\item $\gamma(P) = P\byG$ is locally closed in $\GSpec R$.
\end{enumerate}
\end{thm}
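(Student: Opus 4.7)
Let $I := \gamma(P) \in \GRat R$ and, taking $G$ connected for concreteness, put $T_I := \cC(R/I) \otimes_\k \k(G)$ as in Theorem~\ref{T:fibre}. The strategy is to translate local-closedness statements on the noncommutative side into statements about the commutative domain $T_I$ using the bijection $c\colon \Spec_I R \iso \Spec^G(T_I)$ of that theorem. I begin with a purely topological reduction: since $\gamma$ is continuous and closed and $\Spec_I R = \gamma^{-1}(I)$ is the entire $\gamma$-fibre over $I$, one checks by examining closures in the final topology on $\GSpec R$ (using that $\gamma(\overline{\Spec_I R}) = \overline{\{I\}}$ and $I \notin \gamma(\overline{\Spec_I R}\setminus\Spec_I R)$) that the singleton $\{I\}$ is locally closed in $\GSpec R$ if and only if $\Spec_I R$ is locally closed in $\Spec R$. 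Hence condition~(b) is equivalent to local closedness of $\Spec_I R$, and the theorem reduces to the equivalence of this with~(a).

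For (b)~$\Rightarrow$~(a), I would split the primes $Q \in \Spec R$ properly containing $P$ into two classes according to whether $\gamma(Q) = I$ or $\gamma(Q) \supsetneq I$. For primes of the second class, local closedness of $I$ in $\GSpec R$ furnishes an element $s \in R \setminus I$ belonging to every $G$-prime $J \supsetneq I$, hence to every such $Q$ (since $Q \supseteq \gamma(Q)$). For primes of the first class (those in $\Spec_I R$), Theorem~\ref{T:fibre} identifies them, via $c$ and the inclusion order, with the $G$-stable primes of the domain $T_I$ strictly larger than $c(P)$; the control of rationality in that theorem pins down $c(P)$ as a distinguished element of $\Spec^G(T_I)$, namely the zero ideal in the rational case. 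One is then left with a commutative-algebra problem inside the tensor product $\cC(R/I) \otimes_\k \k(G)$: exhibit a nonzero element lying in every nonzero $G$-stable prime, using the $G$-rationality hypothesis $\cC(R/I)^G = \k$. Transporting this element back to $R$ via the inverse of $c$ and combining with $s$ produces a witness in $\bigl(\bigcap_{Q \supsetneq P} Q\bigr) \setminus P$, establishing~(a).

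The converse (a)~$\Rightarrow$~(b) is the more delicate direction, and I expect it to be the main obstacle. Local closedness passes to subspaces containing the singleton, so from~(a) one obtains $\{P\}$ locally closed in $\Spec_I R$, and hence $\{c(P)\}$ locally closed in $\Spec^G(T_I)$. The task is then to lift this from a single point to the whole fibre $\Spec_I R$, equivalently to pass from local closedness of $c(P)$ in $\Spec^G(T_I)$ back to local closedness of $\{I\}$ in $\GSpec R$. This requires exploiting the tensor product structure of $T_I$ together with the right regular $G$-action on $\k(G)$: a witness $t \in T_I$ for local closedness of $c(P)$ should, after suitable manipulation with the $G$-action, descend to a $G$-stable element of $R/I$, equivalently to an ideal of $R$ properly containing $I$ which is contained in every $G$-prime strictly larger than $I$. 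This manoeuvre is the pivotal technical step; it parallels the arguments of M{\oe}glin--Rentschler \cite{cMrR81},~\cite{cMrRxx} and Vonessen~\cite{nVE98} in the noetherian setting but must here be carried out in full generality, supported by the machinery developed for Theorem~\ref{T:fibre}.
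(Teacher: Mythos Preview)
There is a basic misconception underlying your strategy: Theorem~\ref{T:fibre}(d) says that $T_I/c(P) \cong \k(G)$ when $P$ is rational, so $c(P)$ is a \emph{maximal} ideal of $T_I$, not the zero ideal. (Under $c^{-1}$, the zero ideal of $T_I$ corresponds to $I$ itself, viewed as an element of $\Spec_I R$.) Equivalently, rational ideals are maximal in their $G$-strata; this is Proposition~\ref{P:maximal}.

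For (b)~$\Rightarrow$~(a) this actually simplifies matters: your ``first class'' of primes $Q \supsetneq P$ with $Q\byG = I$ is empty, so only the second-class argument is needed. That is essentially the paper's proof of this direction, which invokes Proposition~\ref{P:maximal} directly to obtain $\Spec_I R \cap \overline{\{P\}} = \{P\}$.

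For (a)~$\Rightarrow$~(b), however, the misconception is fatal. Since $P$ is maximal in $\Spec_I R$, the singleton $\{P\}$ is already \emph{closed} in $\Spec_I R$, hence trivially locally closed there; so restricting hypothesis~(a) to the stratum discards it entirely, and ``$c(P)$ locally closed in $\Spec^G(T_I)$'' is a vacuous statement about a closed point. The content of~(a) comes from primes $Q \supsetneq P$ with $Q\byG \supsetneq I$, all of which lie \emph{outside} $\Spec_I R$, and the commutative algebra $T_I$ simply does not see them. No manipulation with the $G$-action on $T_I$ can recover this lost information.

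The paper's argument for (a)~$\Rightarrow$~(b) proceeds quite differently. After reducing to $G$ connected and $P\byG = 0$, it does not work inside a single stratum but instead uses the $\natural$-bijection of Proposition~\ref{P:nat} with $H = G_P$, which matches \emph{all} $G_P$-stable semiprime ideals of $R$ with the $(\rho\otimes\rho_r)(G)$-stable semiprime ideals of $R \otimes \k(G)^{G_P}$. Local closedness of $P$ transfers through this bijection; a finite centralizing embedding $\til{R} \into (R \otimes \k(G)^{G_P})/P^\natural$ (Lemma~\ref{L:goalRC}) then yields the conclusion for the central closure $\til{R}$, and the lying-over result Proposition~\ref{L:VE} brings it back down to $R$.
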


Theorem~\ref{T:orbit} in conjunction with \eqref{E:rat} has the
following useful consequence. The corollary below extends
\cite[Cor.~2.7]{nVE98} and a standard result on $G$-varieties
\cite[Satz II.2.2]{hK84}.

\begin{cor} \label{C:orbit}
If $P \in \Rat R$  is locally closed in $\Spec R$ then the $G$-orbit
$G.P$ is open in its closure in $\Rat R$.
\end{cor}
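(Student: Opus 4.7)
The plan is to deduce the corollary from Theorem~\ref{T:orbit} and the homeomorphism \eqref{E:rat} by chasing local closedness through the diagram of topological spaces displayed above.

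First I would apply Theorem~\ref{T:orbit}: since $P \in \Rat R$ is locally closed in $\Spec R$, the $G$-prime $P\byG$ is locally closed in $\GSpec R$. Because $P$ is rational, $P\byG$ lies in $\GRat R$ (this is part of the content of \eqref{E:rat}), and $\GRat R$ carries the subspace topology inherited from $\GSpec R$; hence $P\byG$ remains locally closed when viewed in $\GRat R$.

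Next, the homeomorphism \eqref{E:rat} identifies the orbit $G.P$ with $P\byG$, so the singleton $\{G.P\}$ is locally closed in $G \backslash \Rat R$. Writing $\{G.P\} = U \cap C$ with $U$ open and $C$ closed in $G\backslash \Rat R$, and letting $\pi \colon \Rat R \onto G \backslash \Rat R$ denote the canonical surjection, continuity of $\pi$ gives
$$
G.P \;=\; \pi^{-1}(\{G.P\}) \;=\; \pi^{-1}(U) \cap \pi^{-1}(C),
$$
exhibiting $G.P$ as the intersection of an open and a closed subset of $\Rat R$. By the characterization of local closedness recalled in the discussion preceding Theorem~\ref{T:orbit}, this is equivalent to $G.P$ being open in its closure in $\Rat R$, as asserted.

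I do not anticipate a genuine obstacle: once Theorem~\ref{T:orbit} and \eqref{E:rat} are in hand, the argument reduces to a routine topological chase. The one step that deserves a moment's attention is the equality $G.P = \pi^{-1}(\{G.P\})$, which simply records that the fibres of $\pi$ are by construction the $G$-orbits in $\Rat R$.
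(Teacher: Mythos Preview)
Your proof is correct and follows essentially the same approach as the paper: apply Theorem~\ref{T:orbit} to get local closedness of $P\byG$, then pull back along a continuous map to $\Rat R$. The paper does this slightly more directly, pulling back along the composite $\Rat R \hookrightarrow \Spec R \stackrel{\gamma}{\to} \GSpec R$ without passing through $\GRat R$ and $G\backslash \Rat R$; your extra step of restricting to $\GRat R$ and then using the homeomorphism \eqref{E:rat} is harmless and leads to the same conclusion.
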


\begin{proof}
The point $P\byG \in \GSpec R$ is locally closed by
Theorem~\ref{T:orbit}. Applying the easy fact \cite[I.3.3]{nB71}
that preimages of locally closed sets under continuous maps are
again locally closed
 to $f \colon \Rat R \into \Spec R \stackrel{\gamma}{\to} \GSpec R$,
we conclude from \eqref{E:rat} that $f^{-1}(P\byG) = G.P$ is locally
closed in $\Rat R$.
\end{proof}

\subsection{} \label{SS:DMequiv}
In order to put Theorem~\ref{T:orbit} and Corollary~\ref{C:orbit}
into perspective, we mention that rational ideals are oftentimes
locally closed in $\Spec R$. In fact, for many important classes of
algebras $R$, rational ideals are identical with the locally closed
points of $\Spec R$. Specifically, we will say that the algebra $R$
satisfies the \emph{Nullstellensatz} if the following two conditions
are satisfied:
\begin{itemize}
\item[(i)] every prime ideal of $R$ is an
intersection of primitive ideals, and
\item[(ii)] $\cen\left( \End_RV \right) = \k$ holds for every simple  $R$-module $V$.
\end{itemize}
Recall that an ideal of $R$ is said to be (right) primitive if it is
the annihilator of a simple (right) $R$-module. Hypothesis (i) is
known as the \emph{Jacobson property} while versions of (ii) are
referred to as the \emph{endomorphism property} \cite{jMcCjR87} or
the \emph{weak Nullstellensatz} \cite{dM88}, \cite{mL08}. The
Nullstellensatz is quite common. It is guaranteed to hold, for
example, if $\k$ is uncountable and the algebra $R$ is noetherian
and countably generated \cite[Corollary 9.1.8]{jMcCjR87},
\cite[II.7.16]{kBkG02}. The Nullstellensatz also holds for any
affine PI-algebra $R$ \cite[Chap.~6]{lR88}. For many other classes
of algebras satisfying the Nullstellensatz, see \cite[Chapter
9]{jMcCjR87} or \cite[II.7]{kBkG02}.

If $R$ satisfies the Nullstellensatz then the following implications
hold for all primes of $R$:
$$
\text{locally closed} \quad \Rightarrow \quad \text{primitive} \quad
\Rightarrow \quad \text{rational}.
$$
Here, the first implication is an immediate consequence of (i) while
the second follows from (ii); see \cite[Prop.~6]{mL08}. The algebra
$R$ is said to satisfy the \emph{Dixmier-M{\oe}glin equivalence} if
all three properties are equivalent for primes of $R$. Standard
examples of algebras satisfying the Dixmier-M{\oe}glin equivalence
include affine PI-algebras, whose rational ideals are in fact
maximal \cite{cP73}, and enveloping algebras of finite-dimensional
Lie algebras; see \cite[1.9]{rR87} for $\ch \k =0$. (In positive
characteristics, enveloping algebras are affine PI.) More recently,
the Dixmier-M{\oe}glin equivalence has been shown to hold for
numerous quantum groups; see \cite[II.8]{kBkG02} for an overview.

Note that the validity of the Nullstellensatz and the
Dixmier-M{\oe}glin equivalence are intrinsic to $R$. However,
$G$-actions can be useful tools in verifying the latter. Indeed,
assuming the Nullstellensatz for $R$, Theorem~\ref{T:orbit} implies
that the Dixmier-M{\oe}glin equivalence is equivalent to $P\byG$
being locally closed in $\GSpec R$ for every $P \in \Rat R$. This
condition is surely satisfied whenever $\GSpec R$ is in fact finite.

\subsection{} \label{SS:finiteintro}
The final Section~\ref{S:finiteness} briefly addresses the question
as to when $\GSpec R$ is a finite set. Besides being of interest in
connection with the Dixmier-M{\oe}glin equivalence
(\S\ref{SS:DMequiv}), this is obviously relevant for the
$G$-stratification \eqref{E:Gstrat}; see also \cite[Problem
II.10.6]{kBkG02}. Restricting ourselves to algebras $R$ satisfying
the Nullstellensatz, we show in Proposition~\ref{P:finiteness} that
finiteness of $\GSpec R$ is equivalent to the following three
conditions:
\begin{enumerate}
    \item[(i)] the ascending chain condition holds for $G$-stable semiprime ideals of $R$,
    \item[(ii)] $R$ satisfies the Dixmier-M{\oe}glin equivalence, and
    \item[(iii)] $\GRat R = \GSpec R$.
\end{enumerate}
Several versions of Proposition~\ref{P:finiteness} for noetherian
algebras $R$ can be found in \cite[II.8]{kBkG02}, where a profusion
of algebras is exhibited for which $\GSpec R$ is known to be finite.

Note that (i) above is no trouble for the standard classes of
algebras, even in the strengthened form which ignores $G$-stability.
Indeed, noetherian algebras trivially satisfy the ascending chain
condition for all semiprime ideals, and so do all affine
PI-algebras; see \cite[6.3.36']{lR88}. Moreover, as was outlined in
\S\ref{SS:DMequiv}, the Dixmier-M{\oe}glin equivalence (ii) has been
established for a wide variety of algebras. Therefore, in many
situations of interest, Proposition~\ref{P:finiteness} says in
essence that finiteness of $\GSpec R$ is tantamount to the equality
$\GRat R = \GSpec R$. This is also the only condition where the
$G$-action properly enters the picture. The article concludes with
some simple examples of torus actions satisfying (iii). Further work
is needed on how to assure the validity of (iii) under reasonably
general circumstances.

\subsection{}
This article owes a great deal to the ground breaking investigations
of
M{\oe}glin \& Rentsch\-ler
and Vonessen.
The statements of our main results as well as the basic strategies
employed in their proofs have roots in the aforementioned articles
of these authors. We have made an effort to render our presentation
reasonably self-contained while also indicating the original sources
at the appropriate points in the text. The reader interested in the
details of Sections~\ref{S:fibres} and \ref{S:proof} may wish to
have a copy of \cite{nVE98} at hand in addition to \cite{mL08}.

\begin{notat}
Our terminology and notation follows \cite{mL08}. The notations and
hypotheses introduced in the foregoing will remain in effect
throughout the paper. In particular, we will work over an
algebraically closed base field $\k$. Furthermore, $G$ will be an
affine algebraic $\k$-group and $R$ will be an associative
$\k$-algebra (with $1$) on which $G$ acts rationally by $\k$-algebra
automorphisms. For simplicity, $\otimes_\k$ will be written as
$\otimes$. Finally, for any ideal $I \trianglelefteq R$, the largest
$G$-stable ideal of $R$ that is contained in $I$ will be denoted by
$$I\byG = \bigcap_{g \in G} g.I \ .$$
\end{notat}


\section{Topological preliminaries} \label{S:topology}

\subsection{} \label{SS:topology}
Recall that the closed sets of the \emph{Jacobson-Zariski topology}
on $\Spec R$ are exactly the subsets of the form
$$
\bV(I) = \{ P \in \Spec R \mid P \supseteq I \}
$$
where $I \subseteq R$. The topological closure of a subset $A
\subseteq \Spec R$ is given by
\begin{equation} \label{E:closure}
\overline{A} = \bV(\bI(A))  \quad\text{ where}\quad \bI(A) =
\bigcap_{P \in A} P\ .
\end{equation}
The easily checked equalities $\bI(\bV(\bI(A))) = \bI(A)$ and
$\bV(\bI(\bV(I))) = \bV(I)$ show that the operators $\bV$ and $\bI$
yield inverse bijections between the collection of all closed
subsets of $\Spec R$ on one side and the collection of all semiprime
ideals of $R$ (i.e., ideals of $R$ that are intersections of prime
ideals) on the other. Thus, we have an inclusion reversing 1-1
correspondence
\begin{equation} \label{E:correspondence}
\left\{ \text{
\begin{minipage}{1.1in}
\begin{center}
closed subsets\\
$A \subseteq \Spec R$
\end{center}
\end{minipage}
} \right\}  \qquad \stackrel{1\text{-}1}{\longleftrightarrow} \qquad
\left\{ \text{\begin{minipage}{1.50in}
\begin{center}
semiprime ideals\\
$I \trianglelefteq R$
\end{center}
\end{minipage}
}
 \right\}\ .
\end{equation}
Note that the  equality $\bI(\bV(\bI(A))) = \bI(A)$ can also be
stated as
\begin{equation} \label{E:closure2}
\bI(\overline{A}) = \bI(A) \ .
\end{equation}

\subsection{}
The action of $G$ commutes with the operators $\bV$ and $\bI$:
$g.\bV(I) = \bV(g.I)$ and $g.\bI(A) = \bI(g.A)$ holds for all $g \in
G$ and all $I \subseteq R$, $A \subseteq \Spec R$. In particular,
the elements of $G$ act by homeomorphisms on $\Spec R$. Furthermore:
\begin{equation} \label{E:action1}
\text{If $g.A \subseteq A$ for a closed subset $A \subseteq \Spec R$
and $g \in G$ then $g.A = A$.}
\end{equation}
In view of the correspondence \eqref{E:correspondence}, this amounts
to saying that $g.I \supseteq I$ forces $g.I = I$ for semiprime
ideals $I \trianglelefteq R$. But this follows from the fact that
the $G$-action on $R$ is locally finite \cite[3.1]{mL08}: If $r \in
I$ satisfies $g.r \notin I$ then choose a finite-dimensional
$G$-stable subspace $V \subseteq R$ with $r \in V$ to get $I \cap V
\subsetneqq g.I \cap V \subsetneqq g^2.I \cap V \subsetneqq \dots $,
which is impossible.

Finally, consider the $G$-orbit $G.P$ of a point $P \in \Spec R$.
Since $\bI(G.P) = P\byG$, equation \eqref{E:closure} shows that the
closure of $G.P$ in $\Spec R$ is given by
$$
\overline{G.P} = \bV(P\byG) = \{ Q \in \Spec R \mid Q \supseteq
P\byG \}
$$
and \eqref{E:closure2} gives
\begin{equation*} \label{E:closure3}
\bI(\overline{G.P}) = \bI(G.P) = P\byG \ .
\end{equation*}
Thus, the correspondence \eqref{E:correspondence} restricts to an
inclusion reversing bijection
\begin{align} \label{E:correspondence2}
&\left\{ \text{ $G$-orbit closures in $\Spec R$ }
 \right\}  \qquad & &\stackrel{1\text{-}1}{\longleftrightarrow} \qquad
& &\GSpec R  \notag \\
&\qquad\qquad\ \upin & & & &\quad\upin \\
 &\qquad\qquad \overline{G.P} & &\longleftrightarrow & &\ \ \ P\byG \notag
 \ .
\end{align}

\subsection{} \label{SS:topology2}
As was mentioned in the Introduction, the spaces $G \backslash \Spec
R$ and $\GSpec R$ inherit their topology from $\Spec R$ via the
surjections in \eqref{E:spec1} and \eqref{E:spec}:
\begin{equation*} \label{E:specspaces}
\xymatrix{%
& \Spec R \ar@{>>}[dl]_{\pi = \text{can.}} \ar@{>>}[dr]^{\gamma} \\
G  \backslash \Spec R   & &  \GSpec R }
\end{equation*}
with $\pi(P) = G.P$ and $\gamma(P) = P\byG$. Closed sets in $G
\backslash \Spec R$ and in $\GSpec R$ are exactly those subsets
whose preimage in $\Spec R$ is closed. In both cases, preimages are
also $G$-stable, and hence they have the form $\bV(I)$ for some
$G$-stable semiprime ideal $I \trianglelefteq R$.

Let $C$ be a closed subset of $\GSpec R$ and write $\gamma^{-1}(C) =
\bV(I)$ as above. Since $P \supseteq I$ is equivalent to $P\byG
\supseteq I$ for $P \in \Spec R$, we obtain
\begin{equation} \label{E:Gclosure}
\begin{split}
C = \gamma(\bV(I)) &= \{ P\byG \mid P \in \Spec R, P\supseteq I \} \\
&= \{ P\byG \mid P \in \Spec R, P\byG \supseteq I \} \\
&= \{ J \in \GSpec R \mid J \supseteq I \}\ .
\end{split}
\end{equation}
Conversely, if $C = \{ J \in \GSpec R \mid J \supseteq I \}$ for
some $G$-stable semiprime ideal $I \trianglelefteq R$ then
$\gamma^{-1}(C) = \bV(I)$ is closed in $\Spec R$ and so $C$ is
closed in $\GSpec R$. Thus, the closed subsets of $\GSpec R$ are
exactly those of the form \eqref{E:Gclosure}. The partial order on
$\GSpec R$ that is given by inclusion can be expressed in terms of
orbit closures by \eqref{E:correspondence2}: for $P,Q \in \Spec R$,
we have
\begin{equation} \label{E:order}
Q\byG \supseteq P\byG \iff \bar{G.Q} \subseteq \bar{G.P} \iff Q \in
\bar{G.P} \ .
\end{equation}

The map $\gamma$ factors through $\pi$; so we have a map
\begin{equation*} \label{E:gamma'}
\bga \colon G  \backslash \Spec R \to  \GSpec R
\end{equation*}
with $\gamma = \bga \circ \pi$ as in \eqref{E:spec2}. Since
$\gamma^{-1}(C) = \pi^{-1}(\bga^{-1}(C))$ holds for any $C \subseteq
\GSpec R$, the map $\bga$ is certainly continuous. Moreover, if $B
\subseteq G \backslash \Spec R$ is closed then $A = \pi^{-1}(B)
\subseteq \Spec R$ satisfies $\gamma(A) = \bga(B)$ and $A = \bV(I)$
for some $G$-stable semiprime ideal $I \trianglelefteq R$. Hence,
$\bga(B) = \gamma(\bV(I))$ is closed in $\GSpec R$ by
\eqref{E:Gclosure}. This shows that $\bga$ is a closed map.

\subsection{} \label{SS:locclosed}
Recall that a subset $A$ of an arbitrary topological space $X$ is
\emph{locally closed} if and only if $\bar{A} \setminus A = \bar{A}
\cap A^\complement$ is a closed subset of $X$. By \eqref{E:closure},
a prime ideal $P$ of $R$ is locally closed in $\Spec R$ if and only
if $\bV(P) \setminus \{ P \} = \{ Q \in \Spec R \mid Q \supsetneqq P
\}$ is a closed subset of $\Spec R$, which in turn is equivalent to
the condition
\begin{equation} \label{E:locclosed}
P \subsetneqq  
\bI(\bV(P) \setminus \{ P \}) = \bigcap_{\substack{Q \in \Spec R \\
Q \supsetneqq P}} Q  \ .
\end{equation}
Similarly, \eqref{E:Gclosure} implies that a $G$-prime ideal $I$ of
$R$ is locally closed in $\GSpec R$ if and only if
\begin{equation} \label{E:Glocclosed}
I \subsetneqq  
\bigcap_{\substack{J \in \GSpec R \\
J \supsetneqq I}} J  \ .
\end{equation}

\begin{lem} \label{L:finiteindex}
Let $P \in \Spec R$ and let $N$ be a normal subgroup of $G$ having
finite index in $G$. Then $P\byG$ is locally closed in $\GSpec R$ if
and only if $P\byN$ is locally closed in $\NSpec R$.
\end{lem}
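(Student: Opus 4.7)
The plan is to apply the criterion \eqref{E:Glocclosed}. Set $I := P\byG$ and $J_0 := P\byN$, so $I = (J_0)\byG$; then $I$ is locally closed in $\GSpec R$ iff $L_G := \bigcap\{I' \in \GSpec R : I' \supsetneqq I\}$ strictly contains $I$, and $J_0$ is locally closed in $\NSpec R$ iff $L_N := \bigcap\{J' \in \NSpec R : J' \supsetneqq J_0\}$ strictly contains $J_0$. Since $[G:N] < \infty$, the $G$-orbit of $J_0$ in $\NSpec R$ is a finite set $\{J_i = g_i J_0\}$ with $I = \bigcap_i J_i$. Because $J_0 = \bigcap_{n \in N} n.P$ is semiprime, \eqref{E:action1} applied to $G$-translates of $J_0$ forces the $J_i$ to be pairwise incomparable.

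The main technical step is the identification $L_G = \bigcap_i g_i L_N$; writing $L_N^i := g_i L_N = \bigcap_{J \supsetneqq J_i} J$, I would establish both inclusions. For ``$\supseteq$'', any $G$-prime $I' \supsetneqq I$ equals $J'\byG$ for some $J' \in \NSpec R$; $N$-primeness of $J'$ applied to $I = \bigcap_i J_i$ together with pairwise incomparability gives $J' \supsetneqq J_i$ for some $i$ (for if $J' = J_i$, then $I' = I$), so $J' \supseteq L_N^i$ and applying $\byG$ yields $I' \supseteq (L_N)\byG = \bigcap_i g_i L_N$. For ``$\subseteq$'', any $J' \in \NSpec R$ with $J' \supsetneqq J_j$ must satisfy $J'\byG \supsetneqq I$ (otherwise $J'\byG = I$ would force $J' \in \{J_i\}$ via a short argument using \eqref{E:action1}, contradicting $J' \supsetneqq J_j$), so $J' \supseteq J'\byG \supseteq L_G$, giving $L_G \subseteq L_N^j$ for every $j$.

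With this identification in hand, the lemma reduces to $L_N \supsetneqq J_0 \iff \bigcap_i g_i L_N \supsetneqq \bigcap_i g_i J_0$. For the forward direction, I would work in the semiprime quotient $R/I$, whose minimal $N$-primes are exactly $\{J_i/I\}$ with zero intersection; assuming for contradiction that $\bigcap_i g_i(L_N/I) = 0$, then for each $j$ iterated $N$-primeness of $J_j/I$ applied to the $N$-stable ideals $g_i(L_N/I)$ yields $g_i L_N \subseteq J_j$ for some $i$, i.e., $L_N \subseteq g_i^{-1}g_j J_0 = J_k$; pairwise incomparability of the $J_i$ together with $J_0 \subseteq L_N$ forces $k = 0$, so $L_N \subseteq J_0$, contradicting $L_N \supsetneqq J_0$. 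For the backward direction, given $a \in L_G \setminus I = L_G \setminus \bigcap_j J_j$, there is $j$ with $a \notin J_j$, and then $g_j^{-1}a \in g_j^{-1}L_N^j = L_N$ while $g_j^{-1}a \notin g_j^{-1}J_j = J_0$, giving $L_N \supsetneqq J_0$. The main obstacle is the identification $L_G = \bigcap_i g_i L_N$, which rests on the careful use of \eqref{E:action1} and pairwise incomparability; thereafter the forward implication is an $N$-primeness contradiction and the backward a direct translation.
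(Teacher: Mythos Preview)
Your argument is correct. The key steps---pairwise incomparability of the $J_i$ via \eqref{E:action1}, the identification $L_G=\bigcap_i g_iL_N$, and the two implications at the end---all go through as sketched; in particular, the inline claim that $J'\byG=I$ forces $J'\in\{J_i\}$ is exactly the statement that the fibres of $\NSpec R\to\GSpec R$ are $G/N$-orbits, and it follows by combining $N$-primeness with \eqref{E:action1} as you indicate. One small simplification: in the forward direction you only need the case $j=0$; taking any $i$ with $g_iL_N\subseteq J_0$ already gives $J_0\subseteq L_N\subseteq g_i^{-1}J_0$, hence $L_N=J_0$ by incomparability.

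The paper's proof proceeds quite differently. Rather than manipulating the intersections $L_G$ and $L_N$ directly, it works topologically: setting $X=\NSpec R$, $Y=\GSpec R$, $H=G/N$, it first shows that the natural map $H\backslash X\to Y$ is a homeomorphism (so $P\byG$ is locally closed in $Y$ iff the orbit $H.J_0$ is locally closed in $X$), and then proves the orbit formula $\overline{H.J_0}\setminus H.J_0=\bigcup_{h\in H}h.(\overline{\{J_0\}}\setminus\{J_0\})$, from which it follows that $H.J_0$ is locally closed iff $J_0$ is. Your route is more elementary and entirely ideal-theoretic, avoiding the general topology (and the appeal to Bourbaki's result on quotients); the paper's route is shorter once the homeomorphism $H\backslash X\cong Y$ is in hand and yields that structural fact as a byproduct. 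Both rest on the same underlying observation that the fibres over $\GSpec R$ are finite $H$-orbits.
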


\begin{proof}
For brevity, put $X = \NSpec R$, $Y = \GSpec R$ and $H = G/N$. Thus,
$H$ is a finite group acting by homeomorphisms on $X$. From
\eqref{E:spec2} we obtain a continuous surjection $X \to Y$, $I
\mapsto I\byH$, whose fibres are easily seen to be the $H$-orbits in
$X$. Thus, we obtain a continuous bijection $H \backslash X \to Y$.
From the description \eqref{E:Gclosure} of the closed sets in $X$
and $Y$, we further see that this bijection is closed, and hence it
is a homeomorphism. Therefore, the image $I\byH$ of a given $I \in
X$ is locally closed in $Y$ if and only if the orbit $H.I$ is
locally closed in $X$; see \cite[I.5.3, Cor.~of Prop.~7]{nB71}.
Finally, with $\bar{\phantom{.P}}$ denoting topological closure in
$X$, one easily checks that
$$
\bar{H.I} \setminus H.I = \bigcup_{h \in H} h.\left( \bar{\{ I \}}
\setminus \{ I \} \right)
$$
and, consequently, $\left( \bar{H.I} \setminus H.I \right) \cap
\bar{\{ I \}} = \bar{\{ I \}} \setminus \{ I \}$. Thus, $H.I$ is
locally closed if and only if $I$ is locally closed, which proves
the lemma.
\end{proof}

\subsection{} \label{SS:ratspace}
We now turn to the space $\Rat R$ of rational ideals of $R$ and the
associated spaces $G \backslash \Rat R$ and $\GRat R$ with the
induced topologies from $\Spec R$, $G \backslash \Spec R$ and
$\GSpec R$, respectively; see \S\ref{SS:rat}. Restricting the maps
$\pi = \text{can.}$ and $\bga$ in \S\ref{SS:topology2}, we obtain a
commutative triangle of continuous maps
\begin{equation*} \label{E:ratspaces}
\xymatrix{%
& \Rat R \ar@{>>}[dl]_{\pir=\text{can.}} \ar@{>>}[dr]^{\gar} \\
G  \backslash \Rat R \ar[rr]^{\bgr}  & &  \GRat R }
\end{equation*}
The map $\bgr$, identical with \eqref{E:rat}, is bijective.
Furthermore, if $B \subseteq G \backslash \Rat R$ is closed then, as
in \S\ref{SS:topology2}, we have $\bgr(B) = \{ J \in \GRat R \mid J
\supseteq I \}$ for some $G$-stable semiprime ideal $I
\trianglelefteq R$; so $\bgr(B)$ is closed in $\GRat R$. This shows
that $\bgr$ is a homeomorphism. For general reasons, the quotient
map $\pir$ is open and the topology on $G  \backslash \Rat R$ is
identical to the final topology for $\pir$ \cite[III.2.4, Lemme 2
and Prop.~10]{nB71}. By virtue of the homeomorphism $\bgr$, the same
holds for $\GRat R$ and $\gar$.

We remark that injectivity of $\bgr$ and \eqref{E:order} imply that
\begin{equation*} 
Q\byG \supsetneqq P\byG \iff Q \in \bar{G.P} \setminus G.P
\end{equation*}
holds for $P,Q \in \Rat R$. Here, $\bar{\phantom{.P}}$ can be taken
to be the closure in $\Spec R$ or in $\Rat R$. Focusing on the
latter interpretation, we easily conclude that
\begin{equation*}
\text{$P\byG$ is locally closed in $\GRat R$} \iff \text{$G.P$ is
open in its closure in $\Rat R$.}
\end{equation*}
Alternatively, in view of the homeomorphism $\bgr$, local closedness
of $P\byG \in \GRat R$ is equivalent to local closedness of the
point $G.P \in G \backslash \Rat R$, which in turn is equivalent to
local closedness of the preimage $G.P = \pir^{-1}(G.P) \subseteq
\Rat R$; see \cite[I.5.3, Cor.~of Prop.~7]{nB71}.


\section{Ring theoretical preliminaries} \label{S:rings}

\subsection{} \label{SS:C}

The extended centroid of a ring $U$ will be denoted by
$$
\cC(U)\ .
$$
By definition, $\cC(U)$ is the center of the Amitsur-Martindale ring
of quotients of $U$. We briefly recall some basic definitions and
facts. For details, the reader is referred to \cite{mL08}.

The ring $U$ is said to be \emph{centrally closed} if $\cC(U)
\subseteq U$. If $U$ is semiprime then
$$
\til{U} = U\cC(U)
$$
is a centrally closed semiprime subring of the Amitsur-Martindale
ring of quotients of $U$; it is called the \emph{central closure} of
$U$. Furthermore, if $U$ is prime then $\til{U}$ is prime as well
and $\cC(U)$ is a field. Consequently, for any $P \in \Spec U$, we
have a commutative field $\cC(U/P)$.

If $\Gamma$ is any group acting by automorphisms on $U$ then the
action of $\Gamma$ extends uniquely to an action on the
Amitsur-Martindale ring of quotients of $U$, and hence $\Gamma$ acts
on $\cC(U)$; see \cite[2.3]{mL08}. If $I$ is a $\Gamma$-prime ideal
of $U$ then the ring of $\Gamma$-invariants $\cC(U/I)^\Gamma$ is a
field; see \cite[Prop.~9]{mL08}.

\subsection{} \label{SS:extension}

A ring homomorphism $\phi \colon U \to V$ is called
\emph{centralizing} if the ring $V$ is generated by the image
$\phi(U)$ and its centralizer, $C_V(\phi(U)) = \{ v \in V \mid v
\phi(u) = \phi(u)v \ \forall u \in U \}$; see \cite[1.5]{mL08}. The
lemma below is a special case of \cite[Lemma 4]{mL08} and it can
also be found in an earlier unpublished preprint of George Bergman
\cite[Lemma 1]{gBxx}.

\begin{lem} \label{L:extension}
Let $\phi \colon U \into V$ be a centralizing embedding of prime
rings. Then $\phi$ extends uniquely to a homomorphism $\til{\phi}
\colon \til{U} \to \til{V}$ between the central closures of $U$ and
$V$. The extension $\til{\phi}$ is again injective and centralizing.
In particular, $\til{\phi}(\cC(U)) \subseteq \cC(V)$.
\end{lem}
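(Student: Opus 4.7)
My plan is to construct $\til\phi$ on $\cC(U)$ first, and then to extend multiplicatively to $\til U = U\cC(U)$ by the forced formula $\til\phi(\sum u_i c_i) = \sum \phi(u_i)\til\phi(c_i)$. Put $C = C_V(\phi(U))$, so that $V = \phi(U)\cdot C$ by the centralizing hypothesis. Given $c \in \cC(U)$, fix a nonzero ideal $I \trianglelefteq U$ with $cI + Ic \subseteq U$. A direct computation using $V = \phi(U)C$ and $UIU = I$ shows that the two-sided ideal $V\phi(I)V$ of $V$ equals $\phi(I)\cdot C$; since $V$ is prime and $\phi(I) \ne 0$, this ideal is nonzero. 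I will define $\til\phi(c) \in Q(V)$ as the element represented by the candidate $V$-bimodule map
\[
\alpha_c : V\phi(I)V \to V,\qquad \sum_i v_{1,i}\phi(x_i) v_{2,i} \longmapsto \sum_i v_{1,i}\phi(cx_i) v_{2,i}.
\]
Once $\alpha_c$ is shown to be well-defined, its $V$-bilinearity places $\til\phi(c)$ in $\cC(V)$, and replacing $I$ by $I \cap I'$ shows that the construction is independent of the choice of $I$.

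The main obstacle is the well-definedness of $\alpha_c$. The key identity is
\[
\phi(cx_1)\,w\,\phi(x_2) = \phi(x_1)\,w\,\phi(cx_2) \qquad (x_1, x_2 \in I,\ w \in V),
\]
which I verify by writing $w = \sum_j \phi(u_j) t_j$ with $u_j \in U,\ t_j \in C$ and using that each $t_j$ commutes with $\phi(U)$ while $c$ commutes with $x_1 u_j$ inside $Q(U)$; both sides then reduce to $\sum_j \phi(cx_1 u_j x_2)\,t_j$. Granting this, assume $\sum_i v_{1,i}\phi(x_i)v_{2,i} = 0$ in $V$ and put $a = \sum_i v_{1,i}\phi(cx_i)v_{2,i}$. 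Applying the identity to each term in order to shift $c$ from $\phi(cx_i)$ through $v_{2,i}$ onto $\phi(y)$ gives, for every $x, y \in I$,
\[
\phi(x)\,a\,\phi(y) = \phi(x)\Bigl(\sum_i v_{1,i}\phi(x_i)v_{2,i}\Bigr)\phi(cy) = 0.
\]
Thus $\phi(I)\,a\,\phi(I) = 0$. To conclude $a = 0$, observe that the left annihilator $L = \{v \in V : v\phi(I) = 0\}$ satisfies $L\cdot V\phi(I)V = L\cdot\phi(I)C = 0$; since $V\phi(I)V$ is a nonzero ideal of the prime ring $V$, primeness forces $L = 0$. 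Symmetrically, the right annihilator $R = \{v \in V : \phi(I)v = 0\}$ vanishes. Applying $L = 0$ to each $\phi(x)a$ gives $\phi(I)a = 0$, and then $R = 0$ gives $a = 0$.

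With $\til\phi$ defined on $\cC(U)$, I extend it to $\til U$ by the formula above. Well-definedness follows from the same annihilator argument: if $\sum u_i c_i = 0$ in $\til U$ and $c_i I \subseteq U$ for all $i$, then $\bigl(\sum \phi(u_i)\til\phi(c_i)\bigr)\phi(x) = \phi\bigl(\sum u_i c_i x\bigr) = 0$ for every $x \in I$, forcing the element to vanish since the left annihilator of $\phi(I)$ in $\til V$ is zero (by the same reasoning carried out inside the prime ring $Q(V)$). Multiplicativity reduces to $\til\phi(c_1 c_2) = \til\phi(c_1)\til\phi(c_2)$, which holds on $\phi(I)$ by construction and hence globally, because $\til\phi(\cC(U)) \subseteq \cC(V) \subseteq \cen(\til V)$. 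The annihilator trick gives injectivity as well: $\til\phi(q)\phi(I) = \phi(qI) = 0$ implies $qI = 0$, hence $q = 0$ by primeness of $U$. For the centralizing property,
\[
\til\phi(\til U)\cdot C_{\til V}(\til\phi(\til U)) \supseteq \phi(U)\cdot C\cdot \cC(V) = V\cC(V) = \til V,
\]
again using $\til\phi(\cC(U)) \subseteq \cC(V) \subseteq \cen(\til V)$. Uniqueness is immediate: any two extensions differ on $\cC(U)$ by elements of $\cC(V)$ annihilating $\phi(I)$, and the annihilator argument once more yields equality.
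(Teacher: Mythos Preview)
Your argument is correct and proceeds along the same lines as the paper's, the only difference being packaging: the paper observes that $\phi(I)V = V\phi(I)$ is a nonzero ideal of $V$ (hence $\phi(I)$ has zero left and right annihilator in $V$) and then invokes \cite[Lemma~4 and Prop.~2(ii)(iii)]{mL08} for existence, uniqueness, and injectivity, whereas you carry out that construction explicitly via the bimodule map $\alpha_c$. Your key identity $\phi(cx_1)\,w\,\phi(x_2)=\phi(x_1)\,w\,\phi(cx_2)$ and the repeated annihilator trick are exactly the mechanism behind those cited results, and your verification of the centralizing property matches the paper's computation $[v,\til\phi(q)]\phi(I)=0 \Rightarrow [v,\til\phi(q)]=0$ essentially verbatim.
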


\begin{proof}
If $I$ is a nonzero ideal of $U$ then $\phi(I)V = V\phi(I)$ is a
nonzero ideal of $V$. Hence $\phi(I)$ has zero left and right
annihilator in $V$. The existence of the desired extension
$\til{\phi}$ now follows from \cite[Lemma 4]{mL08}, since $\cC_\phi
= \cC(U)$ holds in the notation of that result. Uniqueness of
$\til{\phi}$ as well as injectivity and the centralizing property
are immediate from \cite[Prop.~2(ii)(iii)]{mL08}. For example, in
order to guarantee that $\til{\phi}$ is centralizing, it suffices to
check that $C_V(\phi(U))$ centralizes $\til{\phi}(\cC(U))$. To prove
this, let $q \in \cC(U)$ and $v \in C_V(\phi(U))$ be given. By
\cite[Prop.~2(ii)]{mL08} there is a nonzero ideal $I \trianglelefteq
U$ so that $qI \subseteq U$. For $u \in I$, one computes
$$
\til{\phi}(q)v\phi(u) = \til{\phi}(q)\phi(u)v = \phi(qu)v =
v\phi(qu) = v\til{\phi}(q)\phi(u)\ .
$$
This shows that $[v,\til{\phi}(q)]\phi(I) = 0$ and
\cite[Prop.~2(iii)]{mL08} further implies that $[v,\til{\phi}(q)] =
0$.
\end{proof}

The lemma implies in particular that every automorphism of a prime
ring $U$ extends uniquely to the central closure $\til{U}$. (A more
general fact was already mentioned above.) We will generally use the
same notation for the extended automorphism of $\til{U}$.

\subsection{} \label{SS:bijection}

The essence of the next result goes back to Martindale et al.
\cite{wM69}, \cite{EMO75}.

\begin{prop} \label{L:bijection}
Let $U$ be a centrally closed prime ring and let $V$ be any
$C$-algebra, where $C = \cC(U)$. Then there are bijections
\begin{align*}
\{ P \in \Spec(U\otimes_CV) &\mid P \cap U = 0\} &
&\stackrel{1\text{-}1}{\longleftrightarrow}
& &\Spec V   \\
 &P & &\longmapsto & &P\cap V \\
 U&\otimes_C\fp & &\longmapsfrom & &\quad \fp
 \ .
\end{align*}
These bijections are inverse to each other and they are equivariant
with respect to all automorphisms of $U\otimes_CV$ that stabilize
both $U$ and $V$. Furthermore, hearts are preserved:
$$
\cC((U \otimes_C V)/P) \cong \cC(V/P \cap V) \ .
$$
\end{prop}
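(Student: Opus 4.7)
The plan is to isolate the main technical input---a classical result of Martindale et al.\ \cite{wM69}, \cite{EMO75}, also treated in \cite{mL08}---and then deduce the four claims (bijection in both directions, equivariance, preservation of hearts) one by one. The key fact is this: because $U$ is centrally closed prime with extended centroid $C$, for any $C$-algebra $W$ every two-sided ideal $J$ of $U\otimes_C W$ has the form $J=U\otimes_C(J\cap W)$. This gives an order-preserving bijection between ideals of $W$ and ideals of $U\otimes_C W$, and the bijection is equivariant for any group acting by automorphisms of $U\otimes_C W$ stabilizing both $U$ and $W$. Moreover, if $W$ is prime then so is $U\otimes_C W$, and in that case $\cC(U\otimes_C W)\cong \cC(W)$.

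\emph{Well-definedness.} Given $\fp\in\Spec V$, set $W=V/\fp$. Then $(U\otimes_C V)/(U\otimes_C\fp)\cong U\otimes_C W$ is prime by the fact above, so $U\otimes_C\fp\in\Spec(U\otimes_C V)$. Since $W\ne 0$ contains $C$ as a subring, $U\hookrightarrow U\otimes_C W$ is injective and hence $(U\otimes_C\fp)\cap U=0$. Conversely, suppose $P\in\Spec(U\otimes_C V)$ satisfies $P\cap U=0$, and put $\fp:=P\cap V$. Then $\fp\subsetneq V$ (as $1\notin P$), and $\fp$ is prime: if $aVb\subseteq\fp$ in $V$, then for every $u\in U$, $v\in V$ one has $(1\otimes a)(u\otimes v)(1\otimes b)=u\otimes avb\in U\otimes_C\fp\subseteq P$, so primeness of $P$ forces $1\otimes a\in P$ or $1\otimes b\in P$, i.e.\ $a\in\fp$ or $b\in\fp$.

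\emph{Mutually inverse.} The composition $\fp\mapsto U\otimes_C\fp\mapsto (U\otimes_C\fp)\cap V$ returns $\fp$ because $V$ embeds into $U\otimes_C(V/\fp)$ via $v\mapsto 1\otimes\bar v$ with kernel exactly $\fp$. In the other direction, for $P\in\Spec(U\otimes_C V)$ with $P\cap U=0$, both $P$ and $U\otimes_C(P\cap V)$ are ideals of $U\otimes_C V$ whose intersection with $V$ equals $P\cap V$, so the Martindale ideal bijection gives $P=U\otimes_C(P\cap V)$.

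\emph{Equivariance and hearts.} The equivariance of the two maps under any automorphism of $U\otimes_C V$ stabilizing both $U$ and $V$ is a special case of the corresponding clause in the Martindale input (applied with $W=V$). For preservation of hearts, combine $(U\otimes_C V)/P\cong U\otimes_C(V/\fp)$ with the final clause of the Martindale input to conclude $\cC((U\otimes_C V)/P)\cong \cC(V/\fp)=\cC(V/(P\cap V))$. The one delicate point is the ideal-correspondence statement itself, needed most crucially for the ``$P=U\otimes_C(P\cap V)$'' direction; once this classical result is imported, the remainder is bookkeeping.
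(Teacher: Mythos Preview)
Your proof is correct and essentially identical to the paper's: both invoke the Martindale--Erickson--Osborn ideal correspondence for centrally closed prime rings (cited in the paper as \cite[Lemma~3]{mL08} and \cite{EMO75}) to obtain the bijection, and Matczuk's result \cite{jM82} for preservation of hearts. The only cosmetic differences are that you bundle these facts into a single ``Martindale input'' and verify primeness of $P\cap V$ by hand, whereas the paper simply notes that contraction along a centralizing extension preserves primes.
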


\begin{proof}
The extension $V \into U\otimes_CV$ is centralizing. Therefore,
contraction $P \mapsto P\cap V$ is a well-defined map
$\Spec(U\otimes_CV) \to \Spec V$ which clearly has the stated
equivariance property.

If $V$ is a prime ring then so is $U\otimes_CV$; this follows from
the fact that every nonzero ideal of $U\otimes_CV$ contains a
nonzero element of the form $u \otimes v$ with $u \in U$ and $v \in
V$; see \cite[Lemma 3(a)]{mL08} or \cite[Theorem 3.8(1)]{EMO75}. By
\cite[Cor.~2.5]{jM82} we also know that $\cC(U\otimes_CV) = \cC(V)$.
Consequently, $\fp \mapsto U\otimes_C\fp = (U \otimes_C V)\fp$ gives
a map $\Spec V \to \{ P \in \Spec(U\otimes_CV) \mid P \cap U = 0\}$
which preserves hearts.

Finally, by \cite[Lemma 3(c)]{mL08}, the above maps are inverse to
each other.
\end{proof}

\subsection{} \label{SS:bijection2}

Let $U$ be a prime ring. If $U$ is an algebra over some commutative
field $F$ then the central closure $\til{U}$ is an $F$-algebra as
well, because $\cen(U)\subseteq \cC(U) = \cen(\til{U})$.

\begin{prop} \label{L:bijection2}
Let $U$ and $V$ be algebras over some commutative field $F$, with
$U$ prime. Then there is a bijection
\begin{align*}
\{ \til{P} \in \Spec(\til{U} \otimes_F V) &\mid \til{P} \cap \til{U}
 = 0 \}&  &\tto & \{ P \in \Spec&(U \otimes_F V) \mid P \cap U  = 0 \} \\
&\til{P} & &\longmapsto & &\til{P} \cap (U \otimes_F V) \ .
\end{align*}
This bijection and its inverse are inclusion preserving and
equivariant with respect to all automorphisms of $\til{U} \otimes_F
V$ that stabilize both $\til{U}$ and $U \otimes_F V$. Furthermore,
hearts are preserved under this bijection.
\end{prop}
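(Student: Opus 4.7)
The plan is to construct an explicit inverse to the contraction map using Lemma~\ref{L:extension}, and then verify the claimed properties. The principal obstacle will be showing that the resulting preimage $\til{P}$ is itself prime; uniqueness, equivariance, and heart preservation will follow by relatively standard arguments using the uniqueness clause of Lemma~\ref{L:extension}, while inclusion preservation needs a small additional observation.

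First, given $P \in \Spec(U \otimes_F V)$ with $P \cap U = 0$, I set $T = (U \otimes_F V)/P$. Then $T$ is a prime $F$-algebra in which the embedding $U \into T$ is centralizing, since $T$ is generated by the images of $U$ and $V$, which commute. By Lemma~\ref{L:extension} this extends to a unique injective centralizing $\iota \colon \til{U} \into \til{T}$ with $\iota(C) \subseteq \cC(T) = \cen(\til{T})$, where $C = \cC(U)$. Writing $\iota(\til{U}) = \iota(U) \cdot \iota(C)$, both factors centralize the image $\bar{V}$ of $V$ in $\til{T}$ (the first by construction, the second by centrality in $\til{T}$), and so $\til{u} \otimes v \mapsto \iota(\til{u})\bar{v}$ defines an $F$-algebra homomorphism $\phi \colon \til{U} \otimes_F V \to \til{T}$ that extends the canonical surjection $U \otimes_F V \onto T$. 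Setting $\til{P} := \ker \phi$ yields $\til{P} \cap (U \otimes_F V) = P$ and $\til{P} \cap \til{U} = \ker \iota = 0$.

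The hard part is verifying that $\til{P}$ is prime, equivalently that the image $T \cdot \iota(C) \subseteq \til{T}$ is a prime ring. For $a, b \in T \cdot \iota(C)$ with $aTb = 0$, I would invoke the defining property of $\cC(T)$ to pick a nonzero ideal $J \trianglelefteq T$ with $aJ, Jb \subseteq T$. Then for $j, j' \in J$ and $t \in T$, the element $(aj)t(j'b) = a(jtj')b$ lies in $aTb = 0$, so primeness of $T$ yields $aJ = 0$ or $Jb = 0$ (according to whether there exists $j_0$ with $aj_0 \ne 0$). In either case, multiplying by $\cen(\til{T})$ produces a nonzero ideal of $\til{T}$ that annihilates $a$ (respectively $b$) on one side, and primeness of $\til{T}$ then forces $a = 0$ (respectively $b = 0$).

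For uniqueness of $\til{P}$, and hence bijectivity of the map $\til{P} \mapsto \til{P} \cap (U \otimes_F V)$, I would appeal to Lemma~\ref{L:extension}'s uniqueness clause. Suppose $\til{P}'$ is another prime with the same contractions and set $\til{T}_0 := (\til{U} \otimes_F V)/\til{P}'$. Applying Lemma~\ref{L:extension} to the central embedding $T \into \til{T}_0$ yields $\til{T} \into \til{\til{T}_0}$. Inside $\til{\til{T}_0}$ there are then two candidate extensions of $U \into \til{\til{T}_0}$ to $\til{U}$: the direct inclusion $\til{U} \into \til{T}_0 \into \til{\til{T}_0}$, and the composition $\til{U} \xrightarrow{\iota} \til{T} \into \til{\til{T}_0}$. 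Uniqueness in Lemma~\ref{L:extension} forces them to agree, whence the induced homomorphisms $\til{U} \otimes_F V \to \til{\til{T}_0}$ agree on generators, giving $\til{P} = \til{P}'$. Inclusion preservation in the nontrivial direction follows from a parallel canonicity: for $P_1 \subseteq P_2$, the extensions $\iota_i \colon \til{U} \to \til{T}_i$ agree on elements of a common nonzero ideal of $U$, and this suffices to construct a surjective ring homomorphism $T_1 \cdot \iota_1(C) \onto T_2 \cdot \iota_2(C)$ through which $\phi_2$ factors $\phi_1$, so $\til{P}_1 \subseteq \til{P}_2$. Equivariance is automatic since any automorphism stabilizing $\til{U}$ and $U \otimes_F V$ stabilizes their intersection $U$, and the entire construction is canonical. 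Heart preservation reduces to $\cC(T \cdot \iota(C)) = \cC(T)$: applying Lemma~\ref{L:extension} to the central inclusions $T \into T \cdot \iota(C) \into \til{T}$ gives injections $\cC(T) \into \cC(T \cdot \iota(C)) \into \cC(\til{T}) = \cC(T)$ whose composite is the identity, forcing all three to coincide.
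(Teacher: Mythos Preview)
Your construction of the inverse map, the primeness check, and the heart computation are all correct and essentially match the paper's argument; the paper phrases primeness via the sandwich $T \subseteq \Im\phi \subseteq \til T$, observing that every nonzero ideal of the middle ring meets $T$ nontrivially, but your direct computation is equivalent.

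The genuine difference lies in injectivity and inclusion-preservation of the inverse. The paper handles both simultaneously with a short element-wise argument: for any $q \in \til U \otimes_F V$ there is a nonzero ideal $I \trianglelefteq U$ with $qI \subseteq U \otimes_F V$ (from \cite[Prop.~2(ii)]{mL08}); so if $\til P \cap (U\otimes_F V) \subseteq \til P' \cap (U\otimes_F V)$ and $q \in \til P$, then $qI \subseteq \til P'$, and since $I(\til U \otimes_F V) \not\subseteq \til P'$ (because $\til P' \cap \til U = 0$), primeness of $\til P'$ forces $q \in \til P'$. This three-line argument already gives the implication from contraction-inclusion to inclusion, hence both injectivity and order-preservation of the inverse at once. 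Your uniqueness route through Lemma~\ref{L:extension} for injectivity is valid but considerably longer, and your inclusion-preservation sketch is the thinnest part of the proposal: the phrase ``$\iota_i$ agree on elements of a common nonzero ideal of $U$'' is not literally meaningful (the $\iota_i$ land in different rings), and making the map $T_1\iota_1(C) \to T_2\iota_2(C)$ well-defined ultimately requires exactly the $cI \subseteq U$ trick that the paper exploits directly. So your approach can be completed, but the paper's argument is both simpler and immediately yields the stronger monotonicity statement.
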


\begin{proof}
Since the extension $U \otimes_F V \into \til{U} \otimes_F V$ is
centralizing, the contraction map $\til{P} \mapsto \til{P} \cap (U
\otimes_F V)$ sends primes to primes, and hence it yields a
well-defined map between the sets in the proposition. This map is
clearly inclusion preserving and equivariant as stated.

For surjectivity, let $P$ be a prime of $U \otimes_F V$ such that $P
\cap U = 0$. The canonical map
$$
\phi \colon U \into U \otimes_F V \onto W = (U \otimes_F V)/P
$$
is a centralizing embedding of prime rings. By
Lemma~\ref{L:extension}, there is a unique extension to central
closures,
$$
\til{\phi} \colon \til{U} \tto \til{W}\ .
$$
The image of the canonical map $\psi \colon V \into U \otimes_F V
\onto W$ centralizes $\phi(U)$, and hence it also centralizes
$\til{\phi}(\til{U})$; see the proof of Lemma~\ref{L:extension}.
Therefore, $\til{\phi}$ and $\psi$ yield a ring homomorphism
$$
\til{\phi}_V \colon \til{U} \otimes_F V \tto \til{W}\ .
$$
Put $\til{P} = \Ker \til{\phi}_V$. Since $\til{\phi}_V$ extends the
canonical map $U \otimes_F V \onto W = (U \otimes_F V)/P$, we have
$\til{P} \cap (U \otimes_F V) = P$ and
\begin{equation} \label{E:W}
W \subseteq \Im \til{\phi}_V = (\til{U} \otimes_F V)/\til{P}
\subseteq \til{W} \ .
\end{equation}
In particular, every nonzero ideal of $\Im \til{\phi}_V$ has a
nonzero intersection with $W$, and hence $\Im \til{\phi}_V$ is a
prime ring and $\til{P}$ is a prime ideal. Since $\til{P} \cap U = P
\cap U = 0$, it follows that $\til{P} \cap \til{U} = 0$. This proves
surjectivity. Furthermore, since the inclusions in \eqref{E:W} are
centralizing inclusions of prime rings, they yield inclusions of
extended centroids, $\cC(W) \subseteq \cC((\til{U} \otimes_F
V)/\til{P}) \subseteq \cC(\til{W}) = \cC(W)$ by
Lemma~\ref{L:extension}. Therefore, $\til{P}$ has the same heart as
$P$.

To prove injectivity, let $\til{P}$ and $\til{P}'$ be primes of
$\til{U} \otimes_F V$, both disjoint from $\til{U} \setminus \{0\}$,
such that $\til{P} \cap (U \otimes_F V) \subseteq \til{P}' \cap (U
\otimes_F V)$. We claim that $\til{P} \subseteq \til{P}'$. Indeed,
it follows from \cite[Prop.~2(ii)]{mL08} that, for any $q \in
\til{U} \otimes_F V$, there is a nonzero ideal $I$ of $U$ such that
$qI \subseteq U \otimes_F V$. For $q \in \til{P}$, we conclude that
$qI \subseteq \til{P}'$. Since $I(\til{U}\otimes_F V)$ is an ideal
of $\til{U}\otimes_F V$ that is not contained in $\til{P}'$, we must
have $q \in \til{P}'$. Therefore, $\til{P} \subseteq \til{P}'$ as
claimed. Injectivity follows and we also obtain that the inverse
bijection preserves inclusions. This completes the proof.
\end{proof}

We will apply the equivariance property of the above bijection to
automorphisms of the form $\alpha \otimes_F \beta$ with $\alpha \in
\Aut_{F\mbox{-}\text{alg}}(U)$, extended uniquely to $\til{U}$ as in
\S\ref{SS:extension}, and $\beta \in \Aut_{F\mbox{-}\text{alg}}(V)$.

\subsection{} \label{SS:VE}

The following two technical results have been extracted from
M{\oe}glin and Rentsch\-ler \cite[3.4-3.6]{cMrR81}; see also
Vonessen \cite[proof of Prop.~8.12]{nVE98}. As above, $F$ denotes a
commutative field. If a group $\Gamma$ acts on a ring $U$ then $U$
is called a $\Gamma$-ring; similarly for fields. A $\Gamma$-ring $U$
is called $\Gamma$-simple if $U$ has no $\Gamma$-stable ideals other
than $0$ and $U$.

\begin{lem} \label{L:fields}
Let $F \subseteq L \subseteq K$ be $\Gamma$-fields. Assume that $K =
\Fract A$ for some $\Gamma$-stable $F$-subalgebra $A$ which is
$\Gamma$-simple and affine (finitely generated). Then $L = \Fract B$
for some $\Gamma$-stable $F$-subalgebra $B$ which is $\Gamma$-simple
and generated by finitely many $\Gamma$-orbits.
\end{lem}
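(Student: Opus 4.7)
The plan is to construct $B$ as an intersection $A' \cap L$, where $A'$ is a suitable $\Gamma$-simple localization of $A$.

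First I would show that $L$ is finitely generated over $F$ as a field. Since $A$ is affine, $K/F$ is finitely generated; let $r = \trdeg_F K$. Choose a transcendence basis $t_1,\ldots,t_s$ of $L/F$ inside $L$, so that $L/F(t_1,\ldots,t_s)$ is algebraic, and extend it to a transcendence basis $t_1,\ldots,t_s,u_1,\ldots,u_{r-s}$ of $K/F$. Then $K/F(t_1,\ldots,t_s,u_1,\ldots,u_{r-s})$ is finite, so the subextension $L(u_1,\ldots,u_{r-s})/F(t_1,\ldots,t_s)(u_1,\ldots,u_{r-s})$ is finite as well; since the $u_j$ are algebraically independent over $L$, the standard identity $[L(u):F(t)(u)]=[L:F(t)]$ gives $[L:F(t_1,\ldots,t_s)]<\infty$. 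Hence $L/F$ is finitely generated. Pick nonzero $\mu_1,\ldots,\mu_m \in L$ with $L=F(\mu_1,\ldots,\mu_m)$ and write $\mu_i=\alpha_i/\beta_i$ with nonzero $\alpha_i,\beta_i \in A$.

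Next I would pass to a $\Gamma$-simple localization of $A$ containing the orbits $\Gamma.\mu_i^{\pm 1}$. Let $S \subseteq A$ be the $\Gamma$-stable multiplicatively closed subset generated by $\Gamma\cdot\{\alpha_1,\beta_1,\ldots,\alpha_m,\beta_m\}$ and set $A':=A[S^{-1}] \subseteq K$. Any $\Gamma$-stable ideal of $A'$ contracts to a $\Gamma$-stable ideal of $A$ disjoint from $S$; by $\Gamma$-simplicity of $A$ this must be zero, so $A'$ is $\Gamma$-simple. Moreover $A'$ is generated as an $F$-algebra by the $\Gamma$-orbits of a finite generating set of $A$ together with $\alpha_i^{-1}$ and $\beta_i^{-1}$ ($1\le i \le m$), hence by finitely many $\Gamma$-orbits, and by construction $\Gamma.\mu_i^{\pm 1} \subseteq A'$ for every $i$.

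The candidate for $B$ is $B:=A'\cap L$. It is $\Gamma$-stable, contains each $\mu_i^{\pm 1}$, and therefore satisfies $\Fract B=L$. Two properties remain to be established: $\Gamma$-simplicity of $B$, and generation of $B$ by finitely many $\Gamma$-orbits (the natural guess being that $B$ coincides with the subalgebra $F[\Gamma.\mu_1^{\pm 1},\ldots,\Gamma.\mu_m^{\pm 1}]$). The main obstacle is precisely this last verification. For $\Gamma$-simplicity, a nonzero $\Gamma$-stable ideal $J \trianglelefteq B$ would extend to a nonzero $\Gamma$-stable ideal $JA'$ of $A'$, which by $\Gamma$-simplicity of $A'$ must equal $A'$; this produces a relation $1=\sum_i b_ic_i$ with $b_i \in J$ and $c_i \in A'$, and the delicate step is to descend it to a relation with the $c_i$'s in $B=A'\cap L$, whence $1\in J$. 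I expect this descent to be achieved by decomposing each $c_i$ into $\Gamma$-isotypic components --- available because the paper's standing hypotheses make the $\Gamma$-action locally finite on $A'$ --- and isolating the summand that lies in $L$, using that the output $1$ is $\Gamma$-fixed and so lies in the trivial isotypic component. A parallel analysis should show that $B$ coincides with $F[\Gamma.\mu_1^{\pm 1},\ldots,\Gamma.\mu_m^{\pm 1}]$, yielding the finite generation by $\Gamma$-orbits. This descent argument forms the technical core of the proof and follows the strategy of M{\oe}glin--Rentschler and Vonessen cited just before the lemma statement.
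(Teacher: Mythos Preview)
Your setup through the construction of $A'=A[S^{-1}]$ is fine, but the choice $B=A'\cap L$ leads to two genuine gaps, and the proposed fix for both does not work.

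\textbf{$\Gamma$-simplicity.} The descent ``$1=\sum b_ic_i$ with $c_i\in A'$ $\Rightarrow$ $1\in J$'' cannot be carried out by isotypic decomposition: in this lemma $\Gamma$ is an \emph{arbitrary} group acting by $F$-algebra automorphisms, with no rationality or local finiteness assumed (and none is available in the application, Proposition~\ref{L:VE}). Even if the action on $A$ were locally finite, it would typically fail to be so on the localization $A'$, and in any case $L$ is merely a $\Gamma$-stable subfield, not an isotypic summand, so there is no ``projection onto the part lying in $L$''. The relation $JA'=A'$ simply does not force $J=B$ in general (think of $B=F[x]\subset A'=F[x,x^{-1}]$ with $J=(x)$), so something intrinsic to the situation is needed.

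\textbf{Finite generation by orbits.} The guess $A'\cap L=F[\Gamma.\mu_1^{\pm1},\dots,\Gamma.\mu_m^{\pm1}]$ is unsupported: an element of $A'$ that happens to lie in $L$ need not be a polynomial in the $\Gamma$-translates of the $\mu_i^{\pm1}$.

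The paper avoids both difficulties by working the other way around. One takes $B_0=F[\Gamma.X_0]\subseteq L$ for a finite set $X_0$ of field generators of $L/F$; this $B_0$ automatically satisfies $\Fract B_0=L$ and is generated by finitely many $\Gamma$-orbits. The missing $\Gamma$-simplicity is then obtained via \emph{generic flatness}: since $B'=B_0A$ is affine over $B_0$, there is $0\neq t\in B_0$ with $B'[t^{-1}]$ free over $B_0[t^{-1}]$. Freeness transports properness of ideals upward, and $\Gamma$-simplicity of $A$ then forces every nonzero $\Gamma$-stable semiprime ideal of $B_0$ to contain $t$. Adjoining the orbit $\Gamma.t^{-1}$ to $B_0$ gives the desired $B$. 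Generic flatness is the key idea you are missing; the isotypic-component strategy is neither what M{\oe}glin--Rentschler/Vonessen do here nor viable under the stated hypotheses.
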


\begin{proof}
We need to construct a $\Gamma$-stable $F$-subalgebra $B \subseteq
L$ satisfying
\begin{itemize}
\item[(i)] $L = \Fract B$,
\item[(ii)] $B$ is generated as $F$-algebra by finitely many
$\Gamma$-orbits, and
\item[(iii)] $B$ is $\Gamma$-simple.
\end{itemize}
Note that $L$ is a finitely generated field extension of $F$,
because $K$ is. Fix a finite set $X_0 \subseteq L$ of field
generators and let $B_0 \subseteq L$ denote the $F$-subalgebra that
is generated by $\bigcup_{x \in X_0} \Gamma.x$. Then $B_0$ certainly
satisfies (i) and (ii). We will show that the intersection of all
nonzero $\Gamma$-stable semiprime ideals of $B_0$ is nonzero.
Consider the algebra $B' = B_0A \subseteq K$; this algebra is
$\Gamma$-stable and affine over $B_0$. By generic flatness
\cite[2.6.3]{jD96}, there exists some nonzero $t \in B_0$ so that
$B'[t^{-1}]$ is free over $B_0[t^{-1}]$. We claim that if $\fb_0$ is
any $\Gamma$-stable semiprime ideal of $B_0$ not containing $t$ then
$\fb_0$ must be zero. Indeed, $\fb_0 B_0[t^{-1}]$ is a proper ideal
of $B_0[t^{-1}]$, and hence $\fb_0 B'[t^{-1}]$ is a proper ideal of
$B'[t^{-1}]$. Therefore, $\fb_0 A \cap A$ is a proper ideal of $A$
which is clearly $\Gamma$-stable. Since $A$ is $\Gamma$-simple, we
must have $\fb_0 A \cap A = 0$. Finally, since $\fb_0 \subseteq K =
\Fract A$, we conclude that $\fb_0 = 0$ as desired. Thus, $t$
belongs to every nonzero $\Gamma$-stable semiprime ideal of $B_0$.
Now let $B$ be the $F$-subalgebra of $L$ that is generated by $B_0$
and the $\Gamma$-orbit of $t^{-1}$. Clearly, $B$ still satisfies (i)
and (ii). Moreover, if $\fb$ is any nonzero $\Gamma$-stable
semiprime ideal of $B$ then $\fb \cap B_0$ is a $\Gamma$-stable
semiprime ideal of $B_0$ which is nonzero, because $L = \Fract B_0$.
Hence $t \in \fb$ and so $\fb = B$. This implies that $B$ is
$\Gamma$-simple which completes the construction of $B$.
\end{proof}

The lemma above will only be used in the proof of the following
``lying over'' result which will be crucial later on. For a given
$\Gamma$-ring $U$, we let
$$
\Spec^{\Gamma}U
$$
denote the collection of all $\Gamma$-stable primes of $U$. These
primes are certainly $\Gamma$-prime, but the converse need not hold
in general.

\begin{prop} \label{L:VE}
Let $U$ be a prime $F$-algebra and let $\Gamma$ be a group acting by
$F$-algebra automorphisms on $U$. Suppose that there is a
$\Gamma$-equivariant embedding $\cC(U) \into K$, where $K$ is a
$\Gamma$-field such that $K = \Fract A$ for some $\Gamma$-stable
affine $F$-subalgebra $A$ which is $\Gamma$-simple.

Then there exists a nonzero ideal $D \trianglelefteq U$ such that,
for every $P \in \Spec^{\Gamma}U$ not containing $D$, there is a
$\til{P} \in \Spec^{\Gamma}\til{U}$ satisfying $\til{P} \cap U = P$.
\end{prop}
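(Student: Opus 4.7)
The plan is to use Lemma~\ref{L:fields} to replace $\cC(U)$ with a more tractable $\Gamma$-simple subalgebra $B \subseteq \cC(U)$, and then to transfer $\Gamma$-primes from $U$ to $\til U$ via the intermediate ring $S = UB \subseteq \til U$. The ideal $D$ itself will be produced by a generic-flatness argument parallel to the one used inside the proof of Lemma~\ref{L:fields}.

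Concretely, I would first apply Lemma~\ref{L:fields} to the chain of $\Gamma$-fields $F \subseteq \cC(U) \subseteq K$; the $\Gamma$-equivariance of the embedding $\cC(U) \into K$ is precisely what makes this step legitimate. It produces a $\Gamma$-stable $F$-subalgebra $B \subseteq \cC(U)$ which is $\Gamma$-simple, is generated as an $F$-algebra by finitely many $\Gamma$-orbits, and satisfies $\Fract B = \cC(U)$. Setting $S := UB \subseteq \til U$ one obtains a $\Gamma$-stable subring; since $B \setminus \{0\}$ consists of central regular elements of $\til U$ and $\Fract B = \cC(U)$, the central closure $\til U$ is the localization of $S$ at $B \setminus \{0\}$. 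Multiplication furnishes a $\Gamma$-equivariant surjection $U \otimes_F B \onto S$, with $\Gamma$ acting diagonally on the tensor product.

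For a given $P \in \Spec^\Gamma U$, I would lift $P$ to $\til U$ in two stages. \emph{Stage~1 ($U \to S$):} in $(U/P) \otimes_F B$, every nonzero $\Gamma$-stable ideal meets $1 \otimes B$ in a $\Gamma$-stable ideal of $B$, hence (by $\Gamma$-simplicity of $B$) meets it in all of $B$ as soon as that meeting is nonzero; combined with primeness of $U/P$ and the fact that $B$ is a commutative domain sitting inside $\cC(U)$, this should force the zero ideal of $(U/P) \otimes_F B$ to be $\Gamma$-prime. Pushing this through the surjection $U \otimes_F B \onto S$ produces a $\Gamma$-prime $Q \trianglelefteq S$ satisfying $Q \cap U = P$. \emph{Stage~2 ($S \to \til U$):} because $\til U$ is the central localization of $S$ at $B \setminus \{0\}$, any $\Gamma$-prime of $S$ disjoint from $B \setminus \{0\}$ extends uniquely to a $\Gamma$-prime of $\til U$ that contracts back to it. Propositions~\ref{L:bijection} and \ref{L:bijection2} provide the equivariance and bookkeeping of contractions and hearts needed to confirm that the lifted $\til P$ satisfies $\til P \cap U = P$.

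The nonzero ideal $D \trianglelefteq U$ is then extracted by a generic-flatness argument modelled on the one appearing in the proof of Lemma~\ref{L:fields}, applied now to the inclusion $U \into S$: it yields a nonzero $\Gamma$-stable $D$ such that, whenever $P \not\supseteq D$, the denominators coming from $B$ remain regular modulo $P$, neither the quotient $(U/P) \otimes_F B$ nor its image in $S/PS$ collapses, and the $\Gamma$-prime $Q$ produced in Stage~1 automatically avoids $B \setminus \{0\}$ so that Stage~2 applies. I expect the main obstacle to lie precisely in this bookkeeping: arranging a single nonzero $D$, chosen $\Gamma$-stably, that simultaneously controls $\Gamma$-primeness of the intermediate quotients, non-collapse under the $B$-localization, and compatibility of the two-stage lift with the $\Gamma$-action---exactly the kind of uniform-denominator issue that Lemma~\ref{L:fields} was designed to handle.
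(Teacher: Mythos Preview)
Your overall architecture---pass through $S = UB$ and then localize to $\til U$---matches the paper, but two of your three load-bearing steps do not work as written. First, the construction of $D$: generic flatness in the form used in Lemma~\ref{L:fields} needs a commutative (and finitely generated) base, whereas here $U$ is an arbitrary noncommutative prime $F$-algebra with no noetherian hypothesis. The paper does not localize ``generically'' over $U$ at all. Instead it exploits the Amitsur--Martindale description of $\cC(U)$: each $x \in B \subseteq \cC(U)$ has a nonzero ``denominator ideal'' $\{u \in U \mid xu \in U\}$, and $D$ is defined as the common denominator ideal for a finite set $X \subseteq B$ whose $\Gamma$-orbits generate $B$. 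This $D$ is not $\Gamma$-stable, nor does it need to be; its role is to ensure that, whenever $D \nsubseteq P$, any finite list of elements of $UB$ can be pushed into $U$ by a right ideal not contained in $P$. That denominator control is what drives the rest of the argument, and it has no counterpart in your sketch.

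Second, your Stage~1 claim that every nonzero $\Gamma$-stable ideal of $(U/P) \otimes_F B$ meets $1 \otimes B$ is unsupported, and there is no reason to expect it: ideals of tensor products need not intersect either factor, and $B$ sits inside $\cC(U)$, which bears no \emph{a priori} relation to $\cC(U/P)$. Even granting $\Gamma$-primeness of the tensor product, you would still owe the equality $PB \cap U = P$, without which the push-forward to $S$ does not lie over $P$. The paper handles this directly in $S$: the denominator property of $D$ gives $PB \cap U = P$, Zorn produces a prime $Q \supseteq PB$ of $S$ maximal for $Q \cap U = P$, and then one must check (again via the denominator claim) that the $\Gamma$-core $\til Q = Q\!:\!\Gamma$ is not merely $\Gamma$-prime but an honest prime---a point your outline does not address, and which is exactly what $\Spec^\Gamma$ requires. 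Propositions~\ref{L:bijection} and \ref{L:bijection2} are not relevant here; they concern primes in tensor products over $\cC(U)$ that miss one factor, not the inclusion $U \subseteq UB \subseteq \til U$.
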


\begin{proof}
Applying Lemma~\ref{L:fields} to the given embedding $F \subseteq C
= \cC(U) \into K$ we obtain a $\Gamma$-stable $F$-subalgebra $B
\subseteq C$ such that $C = \Fract B$, $B$ is $\Gamma$-simple and
$B$ is generated as $F$-algebra by finitely many $\Gamma$-orbits.
Fix a finite subset $X \subseteq B$ such that $B$ is generated by
$\bigcup_{x \in X} G.x$ and let
$$
D = \{ u \in U \mid xu \in U \text{ for all $x \in X$}\}\ ;
$$
this is a nonzero ideal of $U$ by \cite[Prop.~2(ii)]{mL08}. In order
to show that $D$ has the desired property, we first make the
following
\begin{claim}
Suppose $P \in \Spec^{\Gamma}U$ does not contain $D$. Then, given
$w_1,\dots,w_n \in UB \subseteq \til{U}$, there exists an ideal $I
\trianglelefteq U$ with $I \nsubseteq P$ and such that $w_iI
\subseteq U$ for all $i$.
\end{claim}
To see this, note that every element of $UB$ is a finite sum of
terms of the form
$$
w = u(g_1.x_1) \cdots (g_r.x_r)
$$
with $u \in U$, $g_j \in G$ and $x_j \in X$. The ideal $J = \left(
\bigcap_{j=1}^r g_j.D \right)^r$ of $U$ satisfies
$$
wJ \subseteq ug_1.(x_1D) \cdots g_r.(x_rD) \subseteq U \ .
$$
Moreover, $J \nsubseteq P$, because otherwise $g_j.D \subseteq P$
for some $j$ and so $D \subseteq P$ contrary to our hypothesis. Now
write the given $w_i$ as above, collect all occuring $g_j$ in the
(finite) subset $E \subseteq G$ and let $s$ be the largest occuring
$r$. Then the ideal $I = \left( \bigcap_{g \in E} g.D \right)^s$
does what is required.

Next, we show that
$$
PB \cap U = P \ .
$$
Indeed, for any $u \in PB \cap U$, the above claim yields an ideal
$I \trianglelefteq U$ with $I \nsubseteq P$ and such that $uI
\subseteq P$. Since $P$ is prime, we must have $u \in P$ which
proves the above equality. Now choose an ideal $Q \trianglelefteq
UB$ which contains the ideal $PB$ and is maximal with respect to the
condition $Q \cap U = P$ (Zorn's Lemma). It is routine to check that
$Q$ is prime. Therefore, $\til{Q} = Q\byGa$ is at least a
$\Gamma$-prime ideal of $UB$ satisfying $\til{Q} \cap U = P$. We
show that $\til{Q}$ is in fact prime. Let $w_1, w_2 \in UB$ be given
such that $w_1UBw_2 \subseteq \til{Q}$. Choosing $I$ as in the claim
for $w_1,w_2$, we have $w_1IUw_2I \subseteq U \cap \til{Q} = P$.
Since $P$ is prime, we conclude that $w_jI \subseteq P$ for $j=1$ or
$2$. Hence, $w_jIB \subseteq \til{Q} = \bigcap_{g \in \Gamma} g.Q$.
Since each $g.Q$ is prime in $UB$ and $IB$ is an ideal of $UB$ not
contained in $g.Q$, we obtain that $w_j \in \bigcap_{g \in \Gamma}
g.Q = \til{Q}$. This shows that $\til{Q}$ is indeed prime.

Finally, $\til{U}$ is the (central) localization of $UB$ at the
nonzero elements of $B$. Furthermore, $\til{Q} \cap B = 0$, since
$\til{Q} \cap B$ is a proper $\Gamma$-stable ideal of $B$.  It
follows that $\til{P} = \til{Q}C$ is a prime ideal of $\til{U}$
which is clearly $\Gamma$-stable and satisfies $\til{P} \cap UB =
\til{Q}$. Consequently, $\til{P} \cap U = \til{Q} \cap U = P$,
thereby completing the proof.
\end{proof}


\section{Description of $G$-strata} \label{S:fibres}

\subsection{} \label{SS:rationalaction}
We now return to the setting of \S\ref{SS:setting}. The $G$-action
on $R$ will be denoted by
\begin{equation} \label{E:Gaction}
\rho = \rho_R \colon G \tto \Aut_{\k\mbox{-}\text{alg}}(R)
\end{equation}
when it needs to be explicitly referred to; so
\begin{equation*} \label{E:Gaction1}
g.r = \rho(g)(r) \,.
\end{equation*}
Recall from \cite[3.1, 3.4]{mL08} that rationality of the action of
$G$ on $R$ is equivalent to the existence of a $\k$-algebra map
\begin{equation*} 
\Delta_R \colon R \tto  R \otimes \k[G]\ , \quad r \mapsto \sum_i
r_i \otimes f_i
\end{equation*}
such that
\begin{equation*} \label{E:action'}
g.r  = \sum_i r_i f_i(g)
\end{equation*}
holds for all $g \in G$ and $r \in R$. Here, $\k[G]$ denotes the
Hopf algebra of regular functions on $G$, as usual. The
$\k[G]$-linear extension of the map $\Delta_R$ is an automorphism of
$\k[G]$-algebras which will also be denoted by $\Delta_R$:
\begin{equation} \label{E:Delta}
\Delta_R \colon R \otimes \k[G] \iso  R \otimes \k[G] \ ;
\end{equation}
see \cite[3.4]{mL08}.

\subsection{} \label{SS:intertwining}

As in \cite{mL08}, the right and left \emph{regular representations}
of $G$ will be denoted by
$$
\rho_r, \rho_\ell \colon G \to \Aut_{\k\mbox{-}\text{alg}}(\k[G])\ ;
$$
they are defined by $\left(\rho_r(x)f\right)(y) = f(yx)$ and
$\left(\rho_\ell(x)f\right)(y) = f(x^{-1}y)$ for $x,y \in G$.

The group $G$ acts (rationally) on the $\k$-algebra $R \otimes
\k[G]$ by means of the maps $\Id_R \otimes \rho_{r,\ell}$ and $\rho
\otimes \rho_{r,\ell}$. The following intertwining formulas hold for
all $g \in G$:
\begin{equation} \label{E:reg1}
\Delta_R \circ \left( \rho \otimes \rho_r \right)(g)  = \left( \Id_R
\otimes \rho_r \right)(g) \circ \Delta_R
\end{equation}
and
\begin{equation} \label{E:reg2}
\Delta_R \circ \left( \Id_R \otimes \rho_\ell \right)(g) = \left(
\rho \otimes \rho_\ell \right)(g) \circ \Delta_R \ ,
\end{equation}
where $\Delta_R$ is the automorphism \eqref{E:Delta}; see
\cite[3.3]{mL08}.

In the following,
$$
\k(G) = \Fract \k[G]
$$
will denote the algebra $\k(G)$ of rational functions on $G$; this
is the full ring of fractions of $\k[G]$, and $\k(G)$ is also equal
to the direct product of the rational function fields of the
irreducible components of $G$ \cite[AG 8.1]{aB91}. The above
$G$-actions extend uniquely to $\k(G)$ and to $R \otimes \k(G)$. We
will use the same notations as above for the extended actions. The
intertwining formulas \eqref{E:reg1} and \eqref{E:reg2} remain valid
in this setting, with $\Delta_R$ replaced by its unique extension to
a $\k(G)$-algebra automorphism
\begin{equation} \label{E:Delta2}
\Delta_R \colon R \otimes \k(G) \iso R \otimes \k(G) \ .
\end{equation}

\subsection{} \label{SS:fibre}

We are now ready to describe the $G$-stratum
\begin{equation*}
\Spec_IR =  \{ P \in \Spec R \mid P\byG = I \}
\end{equation*}
over a given $I \in \GSpec R$. For simplicity, we will assume that
$G$ is connected; so $\k(G)$ is a field which is unirational over
$\k$ \cite[18.2]{aB91}. Furthermore, $I$ is a prime ideal of $R$ by
\cite[Prop.~19(a)]{mL08} and so $\cC(R/I)$ is a field as well. The
group $G$ acts on $R/I$ by means of the map $\rho$ in
\eqref{E:Gaction}. This action extends uniquely to an action on the
central closure $\til{R/I}$, and hence we also have a $G$-action on
$\cC(R/I) = \cen(\til{R/I})$. Denoting the latter two actions by
$\rho$ again, we obtain $G$-actions $\rho \otimes \rho_r$ on
$\til{R/I} \otimes \k(G)$ and on $\cC(R/I) \otimes \k(G)$. As in
\S\ref{SS:VE}, we will write
\begin{equation*} 
\Spec^G(T) =  \{ \fp \in \Spec T \mid \text{$\fp$ is $(\rho \otimes
\rho_r)(G)$-stable} \}
\end{equation*}
for $T = \cC(R/I) \otimes \k(G)$ or $T = \til{R/I} \otimes \k(G)$.
The group $G$ also acts on both algebras $T$ via $\Id \otimes
\rho_\ell$ and the latter action commutes with $\rho \otimes
\rho_r$. Hence, $G$ acts on $\Spec^G(T)$ through $\Id \otimes
\rho_\ell$. We are primarily interested in the first of these
algebras,
$$
T_I = \cC(R/I) \otimes \k(G)\ ,
$$
a commutative domain and a tensor product of two fields.

\begin{thm} \label{T:fibre}
For a given $I \in \GSpec R$, let $T_I = \cC(R/I) \otimes \k(G)$.
There is a bijection
$$
c \colon \Spec_IR \stackrel{\text{\rm bij.}}{\tto} \Spec^G(T_I)
$$
having the following properties, for $P,P' \in \Spec_IR$ and $g \in
G$:
\begin{enumerate}
\item \textnormal{$G$-equivariance:} $c(g.P) = (\Id \otimes
\rho_\ell(g))(c(P))$\,;
\item \textnormal{inclusions:} $P \subseteq P' \iff c(P) \subseteq
c(P')$\,;
\item \textnormal{hearts:} there is an
isomorphism of \,$\k(G)$-fields $\Psi_P \colon \cC(T_I/c(P)) \iso
\cC\left( (R/P) \otimes \k(G) \right)$ satisfying
\begin{align*}
\Psi_P \circ (\rho \otimes \rho_r)(g) &= (\Id_{R/P} \otimes
\rho_r)(g)\circ \Psi_P \,, \\
\Psi_{g.P} \circ (\Id \otimes \rho_\ell)(g) &= (\rho \otimes
\rho_\ell)(g)\circ \Psi_P \,;
\end{align*}
\item \textnormal{rationality:} $P$ is rational if and
only if $T_I/c(P) \cong \k(G)$.
\end{enumerate}
\end{thm}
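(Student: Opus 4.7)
The plan is to adapt the proof of \cite[Theorem~22]{mL08}, which handles the rational sub-fiber $\Rat_I R$, to the full $G$-stratum $\Spec_I R$. Passing to $R/I$, I would reduce to the case $I = 0$; then $R$ is $G$-prime, and since $G$ is connected, $R$ is in fact prime by \cite[Prop.~19(a)]{mL08}. Consequently $C := \cC(R)$ is a field, $\widetilde R = RC$ is the central closure, and $T_0 = C \otimes \k(G)$ embeds in $\widetilde R \otimes \k(G) = \widetilde R \otimes_C T_0$.

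The bijection $c$ is constructed as a composition of three natural operations. For $P \in \Spec_0 R$: (i) the ideal $P \otimes \k(G)$ is a prime of $R \otimes \k(G)$ (since $R/P$ is prime over the algebraically closed field $\k$) and is manifestly $(\Id \otimes \rho_r)$-stable, so by the intertwining formula \eqref{E:reg1} the pullback $Q := \Delta_R^{-1}(P \otimes \k(G))$ is a $(\rho \otimes \rho_r)$-stable prime of $R \otimes \k(G)$; evaluating $\Delta_R(r \otimes 1) = \sum_i r_i \otimes f_i$ together with $g.r = \sum_i r_i f_i(g)$ yields the key identity $Q \cap R = \bigcap_{g \in G} g^{-1}.P = P\byG = 0$. (ii) By Proposition~\ref{L:bijection2} (with $U = R$, $V = \k(G)$, $F = \k$), $Q$ extends uniquely to a prime $\widetilde Q$ of $\widetilde R \otimes \k(G)$ with $\widetilde Q \cap \widetilde R = 0$; the equivariance clause keeps $\widetilde Q$ stable under $\rho \otimes \rho_r$, and the heart is preserved. (iii) Proposition~\ref{L:bijection} (with $U = \widetilde R$, $V = T_0$) then defines $c(P) := \widetilde Q \cap T_0$, a $(\rho \otimes \rho_r)$-stable prime of $T_0$ whose heart equals that of $\widetilde Q$. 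Composing the isomorphisms $\cC(T_0/c(P)) \cong \cC((\widetilde R \otimes \k(G))/\widetilde Q) \cong \cC((R \otimes \k(G))/Q) \cong \cC((R/P) \otimes \k(G))$ produces $\Psi_P$ of clause~(c). Its two commutativity relations follow from the intertwining identities \eqref{E:reg1}, \eqref{E:reg2} combined with the naturality of Propositions~\ref{L:bijection} and \ref{L:bijection2}; equivariance (a) is verified analogously using \eqref{E:reg2} applied to the commuting $(\Id \otimes \rho_\ell)$-action; inclusion-preservation (b) holds at each of the three steps; and rationality (d) is the specialization of (c) in which every heart collapses to $\k(G)$.

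The main obstacle, as I see it, is the surjectivity of $c$. To invert the construction, given $\fp \in \Spec^G(T_0)$, I would form $\widetilde Q := \widetilde R \otimes_C \fp$ (prime by Proposition~\ref{L:bijection}), contract to $Q := \widetilde Q \cap (R \otimes \k(G))$ (Proposition~\ref{L:bijection2}), and apply $\Delta_R$. The remaining — and most delicate — step is to show that $\Delta_R(Q)$ has the form $P \otimes \k(G)$ for some prime $P$ of $R$; equivalently, that every $(\Id \otimes \rho_r)$-stable ideal $J$ of $R \otimes \k(G)$ satisfies $J = (J \cap R) \otimes \k(G)$. This is a Galois-descent statement that hinges on $\k(G)^G = \k$ (valid for connected $G$) and on the locally finite, rational character of the $G$-action; the lying-over result of Proposition~\ref{L:VE} looks well-suited to handle the interplay between $R \otimes \k(G)$ and $\widetilde R \otimes \k(G)$ that is needed here. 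Granting the descent, $P := \Delta_R(Q) \cap R$ is automatically a prime of $R$ with $P\byG = 0$ and $c(P) = \fp$, completing the construction of the inverse bijection.
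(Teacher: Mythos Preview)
Your three-step composition $d$, $e^{-1}$, $f$ is exactly the paper's construction, and your identification of the surjectivity of $d$ as the crux is correct. However, the reference to Proposition~\ref{L:VE} is misplaced: no lying-over argument is needed here. The interplay between $R\otimes\k(G)$ and $\til R\otimes\k(G)$ is already completely handled by Proposition~\ref{L:bijection2}, which is a \emph{bijection}; so surjectivity at that stage is automatic. What remains is purely the descent statement you wrote down: if $Q\in\Spec^G(R\otimes K)$ then $\Delta_R(Q)$ is $(\Id\otimes\rho_r)(G)$-stable by \eqref{E:reg1}, and since $K^{\rho_r(G)}=\k$, Bourbaki \cite[Cor.~to Prop.~V.10.6]{nB81} gives $\Delta_R(Q)=(\Delta_R(Q)\cap R)\otimes K$ directly. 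Proposition~\ref{L:VE} enters the paper only later, in the proof of Theorem~\ref{T:orbit}.

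One further point on (d): the forward implication is indeed the specialization of (c), using $\cC(S_P)=K$ from \cite[Lemma~7]{mL08}. For the converse you need one extra input, namely the $K$-embedding $\cC(R/P)\otimes K\hookrightarrow\cC(S_P)$ from \cite[(1-2)]{mL08}, to descend from $\cC(S_P)=K$ to $\cC(R/P)=\k$; this does not fall out of (c) alone.
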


Note that $\cC(T_I/c(P))$ in (c) is just the classical field of
fractions of the commutative domain $T_I/c(P)$. Furthermore, in the
second identity in (c), we have
\begin{align*}
(\Id \otimes \rho_\ell)(g) \colon \cC(T_I/c(P)) &\iso
\cC(T_I/c(g.P)) \\
(\rho \otimes \rho_\ell)(g) \colon \cC\left( (R/P) \otimes \k(G)
\right) &\iso \cC\left( (R/g.P) \otimes \k(G) \right)
\end{align*}
in the obvious way. For (d), recall from \eqref{E:rat} that there
exists a rational $P \in \Spec_IR$ if and only if $I$ is
$G$-rational.

\begin{proof}
Replacing $R$ by $R/I$, we may assume that $I=0$. Our goal is to
establish a bijection between $\Spec_0R = \{ P \in \Spec R \mid
P\byG = 0 \}$ and $\Spec^G(T_0)$. For (a), this bijection needs to
be equivariant for the $G$-action by $\rho$ on $R$ and by $\Id
\otimes \rho_\ell$ on $T_0$.

As was pointed out above, $R$ is a prime ring. For brevity, we will
put
$$
C = \cC(R)\quad \text{and} \quad K=\k(G) \ .
$$
Thus, $C$ and $K$ are fields and $T_0 = C \otimes K$. Let $\til{R} =
RC$ denote the central closure of $R$; so $\til{R}$ is a centrally
closed prime ring and
$$
\til{R} \otimes K = \til{R} \otimes_{C} T_0 \ .
$$
By Proposition~\ref{L:bijection}, $\Spec T_0$ is in bijection with
the set of all primes $\til{Q} \in \Spec(\til{R} \otimes K)$ such
that $\til{Q} \cap \til{R} = 0$. This bijection is equivariant with
respect to the subgroups $(\rho \otimes \rho_r)(G)$ and $(\Id
\otimes \rho_\ell)(G)$ of $\Aut_{\k\mbox{-}\text{alg}}(\til{R}
\otimes K)$, because these subgroups stabilize both $\til{R}$ and
$T_0$. Therefore, the bijection in Proposition~\ref{L:bijection}
restricts to a bijection
$$
\Spec^G(T_0) \stackrel{1\text{-}1}{\longleftrightarrow} \{\til{Q}
\in \Spec^G(\til{R} \otimes K) \mid \til{Q} \cap \til{R} = 0 \}
$$
which is equivariant for the $G$-action by $\Id \otimes \rho_\ell$
on $T_0$ and on $\til{R} \otimes K$. Furthermore,
Proposition~\ref{L:bijection2} gives a bijection $\{\til{Q} \in
\Spec(\til{R} \otimes K) \mid \til{Q} \cap \til{R} = 0 \}
\stackrel{1\text{-}1}{\longleftrightarrow} \{ Q \in \Spec(R \otimes
K) \mid Q \cap R = 0 \}$ and this bijection is equivariant with
respect to both $(\rho \otimes \rho_r)(G)$ and $(\Id \otimes
\rho_\ell)(G)$. As above, we obtain a bijection
$$
\{\til{Q} \in \Spec^G(\til{R} \otimes K) \mid \til{Q} \cap \til{R} =
0 \} \stackrel{1\text{-}1}{\longleftrightarrow} \{ Q \in \Spec^G(R
\otimes K) \mid Q \cap R = 0 \}
$$
which is equivariant for the $G$-action by $\Id \otimes \rho_\ell$
on $R \otimes K$ and on $\til{R} \otimes K$. Hence it suffices to
construct a bijection
\begin{equation} \label{E:til}
d \colon \Spec_0R \tto \{ Q \in \Spec^G(R \otimes K) \mid Q \cap R =
0 \}
\end{equation}
which is equivariant for the $G$-action by $\rho$ on $R$ and by $\Id
\otimes \rho_\ell$ on $R \otimes K$.

For a given $P \in \Spec_0R$, consider the homomorphism of
$K$-algebras
\begin{equation} \label{E:phiP}
\phi_P \colon R \otimes K \stackrel{\Delta_R\,}{\tto} R \otimes K
\stackrel{\text{can.}}{\onto} S_P = (R/P) \otimes K \ ,
\end{equation}
where $\Delta_R$ is the automorphism \eqref{E:Delta2} and the second
map is the $K$-linear extension of the canonical epimorphism $R
\onto R/P$. The algebra $S_P$ is prime, since $K$ is unirational
over $\k$. Therefore,
\begin{equation} \label{E:d}
d(P) = \Ker \phi_P = \Delta_R^{-1}(P \otimes K)
\end{equation}
is a prime ideal of $R \otimes K$. From  \cite[Lemma 17]{mL08}, we
infer that $P\byG = d(P) \cap R$, and so $d(P) \cap R = 0$.
Furthermore, $d(P)$ clearly determines $P$. Hence, the map $P
\mapsto d(P)$ yields an injection of $\Spec_0R$ into $\{ Q \in
\Spec(R \otimes K) \mid Q \cap R = 0 \}$.

We now check that this injection is $G$-equivariant and has image in
$\Spec^G(R \otimes K)$. Note that \eqref{E:reg1} and \eqref{E:reg2}
imply the following equalities for all $g \in G$:
\begin{align}
\phi_P \circ (\rho \otimes \rho_r)(g) &= (\Id_{R/P} \otimes
\rho_r)(g)\circ \phi_P \,, \label{E:reg1'} \\
\phi_{g.P} \circ (\Id_R \otimes \rho_\ell)(g) &= (\rho \otimes
\rho_\ell)(g)\circ \phi_P \,. \label{E:reg2'}
\end{align}
In \eqref{E:reg2'}, we view $(\rho \otimes \rho_\ell)(g)$ as an
isomorphism $S_P \iso S_{g.P}$ in the obvious way. In particular,
\eqref{E:reg1'} shows that $d(P)$ is stable under $(\rho \otimes
\rho_r)(G)$ while \eqref{E:reg2'} gives
$$
d(g.P) = (\Id_R \otimes \rho_\ell)(g)(P) \ ;
$$
so the map $P \mapsto d(P)$ is $G$-equivariant.

For surjectivity of $d$ and the inverse map, let $Q \in \Spec^G(R
\otimes K)$ be given such that $Q \cap R = 0$. Put
$$
P = R \cap \Delta_R( Q ) \ .
$$
Then $P$ is prime in $R$, because the extension $R \into R \otimes
K$ is centralizing. Furthermore, \cite[Lemma 17]{mL08} gives $P\byG
= \Delta_R^{-1}(P \otimes K) \cap R = Q \cap R = 0$; so $P \in
\Spec_0R$. We claim that $Q = d(P)$ or equivalently, $\Delta_R( Q )
= P \otimes K$. To see this, note that $\Delta_R\left( Q \right)$ is
stable under $(\Id \otimes \rho_r)(G)$ by \eqref{E:reg1}. Thus, the
desired equality $\Delta_R( Q ) = P \otimes K$ follows from
\cite[Cor.~to Prop.~V.10.6]{nB81}, because the field of
$\rho_r(G)$-invariants in $K$ is $\k$. Thus, $d$ is surjective and
the inverse of $d$ is given by
\begin{equation} \label{E:dinv}
d^{-1}(Q) = R \cap \Delta_R( Q ) \ .
\end{equation}
This finishes the construction of the desired $G$-equivariant
bijection \eqref{E:til}.

To summarize, we have constructed a bijection
$$
c \colon \Spec_0R \tto \Spec^G(T_0)
$$
with property (a); it arises as the composite of the following
bijections:
\begin{equation} \label{E:bijections}
\xymatrix{%
\Spec_0R \ar[r]^-{d} \ar[d]_{c = f \circ e^{-1} \circ d} & \{ Q \in
\Spec^G(R \otimes K) \mid Q \cap R = 0 \}\\
\Spec^G(T_0) &  \{\til{Q} \in \Spec^G(\til{R} \otimes K) \mid
\til{Q} \cap \til{R} = 0 \} \ar[u]_e \ar[l]_-f }
\end{equation}
Formulas for $d$ and its inverse are given in \eqref{E:d} and
\eqref{E:dinv}, respectively. The other maps are as follows:
\begin{align}
e(\til{Q}) &= \til{Q} \cap (R \otimes K)\,, \label{E:e} \\
f(\til{Q}) &= \til{Q} \cap T_0  \,, \label{E:f} \\
f^{-1}(\fp) &= \til{R}\otimes_C\fp = (\til{R} \otimes K)\fp \ ;
\end{align}
see Propositions~\ref{L:bijection} and \ref{L:bijection2}. The maps
$d^{\pm 1}$, $f^{\pm 1}$ and $e$ visibly preserve inclusions, and
Proposition~\ref{L:bijection2} tells us that this also holds for
$e^{-1}$. Property (b) follows. By Propositions~\ref{L:bijection}
and \ref{L:bijection2}, both $e$ and $f$ also preserve hearts. Thus,
we have an isomorphism of $K$-fields
\begin{equation*}
\Psi_P \colon \cC(T_0/c(P)) \iso \cC\left((R \otimes K)/d(P)\right)
\iso \cC(S_P) \ .
\end{equation*}
The desired identities for $\Psi$ are consequences of
\eqref{E:reg1'} and \eqref{E:reg2'}. This proves property (c).

For (d), note that
\begin{equation} \label{E:Cincl}
K \subseteq T_0/c(P) \subseteq \cC(T_0/c(P)) = \Fract\left( T_0/c(P)
\right) \cong \cC(S_P)
\end{equation}
holds for any $P \in \Spec_0R$. If $P$ is rational then $\cC(S_P) =
K$ by \cite[Lemma 7]{mL08}, and so $T_0/c(P) = K$. Conversely, if
$T_0/c(P) = K$ then \eqref{E:Cincl} gives $\cC(S_P) = K$. Since
there always is a $K$-embedding $\cC(R/P) \otimes K \into \cC(S_P)$
by \cite[equation (1-2)]{mL08}, we conclude that $\cC(R/P) = \k$; so
$P$ is rational. This proves (d), and hence the proof of the theorem
is complete.
\end{proof}

\subsection{} \label{SS:maximal}

Note that Theorem~\ref{T:fibre}(b) and (d) together imply that
rational ideals are maximal in their $G$-strata. The following
result, which generalizes Vonessen \cite[Theorem 2.3]{nVE98}, is a
marginal strengthening of this fact. In particular, the group $G$ is
no longer assumed to be connected.

\begin{prop} \label{P:maximal}
Let $P \in \Rat R$ and let $I \trianglelefteq R$ be any ideal of $R$
such that $I \supseteq P$. If $I\byG = P\byG$ then $I = P$.
\end{prop}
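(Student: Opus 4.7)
The plan is to reduce the problem and then apply Theorem~\ref{T:fibre}, exploiting the maximality of $c(P)$ in $T_0$ that comes from rationality. First, passing to $R/P\byG$ preserves the rationality of $P$ (the extended centroid depends only on the factor algebra) and achieves $P\byG = 0$; the hypothesis $I\byG = P\byG$ becomes $I\byG = 0$. An argument in the spirit of Lemma~\ref{L:finiteindex}, replacing $G$ by its connected component $G^{\circ}$ of finite index in $G$, reduces us to the case where $G$ is connected, whence $R = R/P\byG$ is prime by \cite[Prop.~19(a)]{mL08}.

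Next, Theorem~\ref{T:fibre} applied to the $G$-prime $0 \in \GSpec R$ gives a bijection $c\colon \Spec_0 R \iso \Spec^G(T_0)$ with $T_0 = \cC(R)\otimes \k(G)$. Part~(d) combined with the rationality of $P$ forces $T_0/c(P) \cong \k(G)$, so $c(P)$ is a \emph{maximal} ideal of the commutative domain $T_0$; by the order-isomorphism~(b), $P$ is maximal in $\Spec_0 R$. The strategy is now to upgrade this maximality to cover arbitrary ideals $J$ satisfying $J\supseteq P$ and $J\byG = 0$.

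The central step is to extend the chain of correspondences from primes to arbitrary ideals. For any ideal $J$ of $R$, $d(J):=\Delta_R^{-1}(J\otimes \k(G))$ satisfies $d(J)\cap R = J\byG$ by \cite[Lemma 17]{mL08}, and by the Corollary to \cite[Prop.~V.10.6]{nB81} (applied via $\Delta_R$ using $\k(G)^{\rho_r(G)} = \k$), the map $d$ is an inclusion-preserving bijection between arbitrary ideals of $R$ and $(\rho\otimes\rho_r)(G)$-stable ideals of $R\otimes \k(G)$. A parallel extension in the remaining steps of the bijection chain used in the proof of Theorem~\ref{T:fibre}---the flatness of $\tilde R$ over its center $\cC(R)$ makes $\mathfrak a\mapsto \tilde R\otimes_{\cC(R)}\mathfrak a$ into an inclusion-preserving bijection between ideals of $T_0$ and ideals of $\tilde R\otimes \k(G)$ disjoint from $\tilde R$---should upgrade $c$ to an inclusion-preserving bijection between the collection $\{J\trianglelefteq R : J\byG = 0,\ J\neq R\}$ and the collection of $(\rho\otimes\rho_r)(G)$-stable proper ideals of $T_0$.

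Under this extended correspondence, $I$ (which is proper, since $R\byG = R \neq 0$) corresponds to a $(\rho\otimes\rho_r)(G)$-stable proper ideal $\mathfrak a$ of $T_0$ containing $c(P)$. The maximality of $c(P)$ in $T_0$ forces $\mathfrak a = c(P)$, and hence $I = P$. The main obstacle is rigorously verifying the extension of Propositions~\ref{L:bijection} and~\ref{L:bijection2} from primes to arbitrary ideals, particularly the passage between ideals of $\tilde R \otimes \k(G)$ disjoint from $\tilde R$ and ideals of $R\otimes \k(G)$ disjoint from $R$; this will likely require a flatness/lying-over argument analogous to Proposition~\ref{L:VE}.
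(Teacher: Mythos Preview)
Your approach has a genuine gap that you yourself flag as ``the main obstacle'': extending the bijection $c$ from primes to arbitrary ideals. This extension is neither established in the paper nor easy. In particular, the passage between ideals of $\til{R}\otimes\k(G)$ disjoint from $\til{R}$ and ideals of $R\otimes\k(G)$ disjoint from $R$ (the analogue of Proposition~\ref{L:bijection2} for non-prime ideals) is not a formal consequence of flatness and would require real work; it is not clear it even holds in the generality you need.

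The paper avoids this entirely by a one-line reduction you missed: after passing to connected $G$, replace $I$ by an ideal \emph{maximal} subject to the condition $I\byG = P\byG$ (Zorn). Such a maximal ideal is automatically prime; see \cite[proof of Proposition~8]{mL08}. Now both $P$ and $I$ lie in $\Spec_{P\byG}R$, and Theorem~\ref{T:fibre}(b),(d) finishes immediately: $c(P)\subseteq c(I)$ with $c(P)$ maximal forces $c(P)=c(I)$, hence $P=I$. No extension of $c$ beyond primes is needed.

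A smaller point: your reduction to connected $G$ ``in the spirit of Lemma~\ref{L:finiteindex}'' is misdirected---that lemma concerns local closedness, not the present situation. The paper's reduction is different: with $I^0 = I\!\!:\!\!G^0$ and $P^0 = P\!\!:\!\!G^0$, one has $I^0 \supseteq P^0 \supseteq P\byG = I\byG = \bigcap_{x\in G/G^0} x.I^0$; since $P^0$ is $G^0$-prime, $P^0 \supseteq x.I^0$ for some $x$, and then \eqref{E:action1} forces $I^0 = P^0$. This reduces the statement to the same statement for $G^0$.
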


\begin{proof}
Let $G^0$ denote the connected component of the identity in $G$;
this is a normal subgroup of finite index in $G$ \cite[1.2]{aB91}.
Putting $I^0 = I\!\!:\!\! G^0$ and $P^0 = P\!\!:\!\! G^0$, we have
$$
I^0 \supseteq P^0 \supseteq P\byG = I\byG = \bigcap_{x \in G/G^0}
x.I^0 \ .
$$
Since $P^0$ is $G^0$-prime and all $x.I^0$ are $G^0$-stable ideals
of $R$, we conclude that $I^0 \supseteq P^0 \supseteq x.I^0$ for
some $x$ and \eqref{E:action1} further implies that $I^0 = P^0$.
Therefore, we may replace $G$ by $G^0$, thereby reducing to the case
where $G$ is connected. Furthermore, replacing $I$ by an ideal that
is maximal subject to the condition $I\byG=P\byG$, we may also
assume that $I$ is prime; see \cite[proof of Proposition 8]{mL08}.
Thus, $P$ and $I$ both belong to $\Spec_{P:G}R$ and
Theorem~\ref{T:fibre}(b),(d) yields the result.
\end{proof}

\begin{cor} \label{C:maximal}
Let $I \in \GSpec R$ be locally closed in $\GSpec R$. Then the
maximal members of the $G$-stratum $\Spec_IR$ are locally closed in
$\Spec R$. In particular, if $R$ satisfies the Nullstellensatz (see
\S\ref{SS:DMequiv}) then the maximal members of the $G$-stratum
$\Spec_IR$ are exactly the rational ideals in $\Spec_IR$.
\end{cor}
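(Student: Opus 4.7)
The plan is to verify the local closedness criterion \eqref{E:locclosed} for a maximal member $P$ of $\Spec_IR$, and then deduce the rationality statement by combining with Proposition~\ref{P:maximal} and the Nullstellensatz implications of \S\ref{SS:DMequiv}. The only piece of information about $I$ we need is supplied by \eqref{E:Glocclosed}: local closedness of $I$ in $\GSpec R$ means that $I' = \bigcap\{J \in \GSpec R \mid J \supsetneqq I\}$ satisfies $I' \supsetneqq I$. Note that $I'$ is automatically $G$-stable, being an intersection of $G$-stable ideals.

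Let $P$ be maximal in $\Spec_IR$. We may assume $P$ is not a maximal ideal of $R$, since otherwise $\{P\}$ is closed in $\Spec R$ and there is nothing to prove. Set $\mathfrak{J} = \bigcap\{Q \in \Spec R \mid Q \supsetneqq P\}$; the goal is to show $\mathfrak{J} \supsetneqq P$. For any prime $Q \supsetneqq P$ one has $Q\byG \supseteq P\byG = I$, and the maximality of $P$ within $\Spec_IR$ rules out $Q\byG = I$. Hence $Q\byG \supsetneqq I$, so by the defining property of $I'$ we get $Q \supseteq Q\byG \supseteq I'$, and taking the intersection yields $\mathfrak{J} \supseteq I'$.

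The critical step is now a short contradiction. If $\mathfrak{J} = P$, then $P \supseteq I'$. Since $I'$ is $G$-stable, $g.P \supseteq g.I' = I'$ for every $g \in G$, hence $P\byG \supseteq I'$. This reads $I \supseteq I'$ and contradicts $I' \supsetneqq I$. Therefore $\mathfrak{J} \supsetneqq P$, and $P$ is locally closed in $\Spec R$ by \eqref{E:locclosed}.

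For the in-particular clause, Proposition~\ref{P:maximal} shows that any rational ideal in $\Spec_IR$ is maximal there. Conversely, given a maximal $P \in \Spec_IR$, the preceding argument places $P$ among the locally closed points of $\Spec R$, and under the Nullstellensatz the chain ``locally closed $\Rightarrow$ primitive $\Rightarrow$ rational'' recalled in \S\ref{SS:DMequiv} makes $P$ rational. The only subtle point in the whole argument is the book-keeping that forces $Q\byG \supsetneqq I$ for every prime $Q \supsetneqq P$: this genuinely relies on the maximality of $P$ within the stratum $\Spec_IR$ and not merely within $\Spec R$.
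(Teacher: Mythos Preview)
Your proof is correct and follows essentially the same route as the paper's: both use maximality of $P$ in the stratum to force $Q\byG \supsetneqq I$ for every prime $Q \supsetneqq P$, then combine this with local closedness of $I$ to derive a contradiction from $P = \bigcap_{Q \supsetneqq P} Q$, and both handle the in-particular clause via Proposition~\ref{P:maximal} and the Nullstellensatz implications. The only differences are cosmetic: you introduce the names $I'$ and $\mathfrak{J}$ and spell out the contradiction via $G$-stability of $I'$, whereas the paper compresses this into the observation $I \neq \bigcap_{Q \supsetneqq P} Q\byG$ (which equals $(\bigcap Q)\byG$).
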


\begin{proof}
Let $P \in \Spec R$ be maximal in its $G$-stratum $\Spec_IR$, where
$I = P\byG$. Then, for any $Q \in \Spec R$ with $Q \supsetneqq P$,
we have $Q\byG \supsetneqq I$. Since $I$ is locally closed, it
follows that $I \neq \bigcap_{Q \supsetneqq P} Q\byG$. Hence $P \neq
\bigcap_{Q \supsetneqq P} Q$ which proves that $P$ is locally closed
in $\Spec R$.

Finally, rational ideals are always maximal in their $G$-strata by
Proposition~\ref{P:maximal}. In the presence of the Nullstellensatz,
the converse follows from the preceding paragraph.
\end{proof}

\subsection{} \label{SS:nat}

We review some general results of M{\oe}glin and Rentschler
\cite{cMrR86} and Vonessen \cite{nVE98}. Some of the constructions
below were already used, in a more specialized form, in the proof of
Theorem~\ref{T:fibre}. The affine algebraic group $G$ need not be
connected here, but we will only use this material in the connected
case later on.

Fix a closed subgroup $H \le G$ and let
$$
\k(G)^{H} = \k(H\backslash G) \subseteq \k(G)
$$
denote the subalgebra of invariants for the left regular action
$\rho_\ell\big|_{H}$ on $\k(G)$. Following M{\oe}glin and Rentschler
\cite{cMrR86} we define, for an arbitrary ideal $I \trianglelefteq
R$,
\begin{equation} \label{E:nat}
I^\natural = \Delta_R^{-1}(I \otimes \k(G)) \cap (R \otimes \k(G)^H)
\ .
\end{equation}
Here $\Delta_R$ is the automorphism \eqref{E:Delta2} of $R \otimes
\k(G)$. Thus, $I^\natural$ is certainly an ideal of $R \otimes
\k(G)^H$. Furthermore:
\begin{itemize}
\item
If $I$ is semiprime then $I \otimes \k(G)$ is a semiprime ideal of
$R \otimes \k(G)$, because $\k(G)$ is a direct product of fields
that are unirational over $\k$. Therefore, $I^\natural$ is semiprime
in this case. For connected $G$, we also see that if $I$ is prime
then $I^\natural$ is likewise, as in \eqref{E:d}.
\item
The group $G$ acts on $R \otimes \k(G)^H$ by means of $\rho \otimes
\rho_r$. Formula \eqref{E:reg1} implies that $I^\natural$ is always
stable under this action. Moreover, if the ideal $I$ is $H$-stable
then formula \eqref{E:reg2} implies that the ideal $\Delta_R^{-1}(I
\otimes \k(G))$ of $R \otimes \k(G)$ is stable under the
automorphism group $\Id_R \otimes \rho_\ell(H)$. Therefore,
\cite[Lemma 6.3]{nVE98} (or \cite[Cor.~to Prop.~V.10.6]{nB81} for
connected $G$) implies that $\Delta_R^{-1}(I \otimes \k(G)) =
I^\natural \otimes_{\k(G)^H} \k(G)$ holds in this case, and hence
\begin{equation} \label{E:sharp}
I = \Delta_R\left( I^\natural \otimes_{\k(G)^H} \k(G) \right) \cap R
\ .
\end{equation}
\end{itemize}
To summarize, the map $I \mapsto I^\natural$ gives an injection of
the set of all $H$-stable semiprime ideals of $R$ into the set of
all $(\rho \otimes \rho_r)(G)$-stable semiprime ideals of $R \otimes
\k(G)^H$. In fact:

\begin{prop}[M{\oe}glin and Rentschler, Vonessen] \label{P:nat}
Let $H$ be a closed subgroup of $G$. The map $I \mapsto I^\natural$
in \eqref{E:nat} gives a bijection from the set of all $H$-stable
semiprime ideals of $R$ to the set of all $(\rho \otimes
\rho_r)(G)$-stable semiprime ideals of $R \otimes \k(G)^H$. This
bijection and its inverse, given by \eqref{E:sharp}, preserve
inclusions.
\end{prop}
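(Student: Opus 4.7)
The plan is to prove Proposition~\ref{P:nat} by exhibiting the explicit inverse $J \mapsto J^\sharp := \Delta_R\bigl(J \otimes_{\k(G)^H} \k(G)\bigr) \cap R$ (which is just a rewriting of \eqref{E:sharp}). That $I \mapsto I^\natural$ is well-defined on $H$-stable semiprime ideals, lands among $(\rho \otimes \rho_r)(G)$-stable semiprime ideals of $R \otimes \k(G)^H$, and satisfies $(I^\natural)^\sharp = I$ has already been established in the discussion preceding the proposition. What remains is to show that for a $(\rho \otimes \rho_r)(G)$-stable semiprime ideal $J$ of $R \otimes \k(G)^H$, the ideal $J^\sharp$ is $H$-stable and semiprime in $R$ and satisfies $(J^\sharp)^\natural = J$. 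Preservation of inclusions is built into the very definitions of the two maps.

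The technical crux will be to identify $\Delta_R(J^e)$ with an extended ideal $J^\sharp \otimes \k(G)$, where $J^e := J \otimes_{\k(G)^H} \k(G) \trianglelefteq R \otimes \k(G)$. First I would note that $J^e$ is a semiprime $(\rho \otimes \rho_r)(G)$-stable ideal of $R \otimes \k(G)$: semiprimeness follows from $\k(G)$ being free---hence faithfully flat---over $\k(G)^H$ by Vonessen~\cite[Lemma~6.3]{nVE98} (compare also \cite[Cor.~to Prop.~V.10.6]{nB81}), and $(\rho \otimes \rho_r)(G)$-stability is inherited from $J$. By the intertwining formula \eqref{E:reg1}, $\Delta_R(J^e)$ is then $(\Id \otimes \rho_r)(G)$-stable. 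A further appeal to \cite[Lemma~6.3]{nVE98} (or to \cite[Cor.~to Prop.~V.10.6]{nB81} in the connected case, where $\k(G)^{\rho_r(G)} = \k$) shows that every such ideal of $R \otimes \k(G)$ is extended from an ideal of $R$, which forces $\Delta_R(J^e) = J^\sharp \otimes \k(G)$.

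Once this identification is in hand, the remaining points drop out quickly. Semiprimeness of $J^\sharp$ follows from semiprimeness of $J^\sharp \otimes \k(G)$ by faithful flatness. For $H$-stability, note that $J$ is trivially $(\Id \otimes \rho_\ell)(H)$-stable since $\rho_\ell(H)$ fixes $\k(G)^H$ elementwise; hence so is $J^e$, and by \eqref{E:reg2} the ideal $\Delta_R(J^e) = J^\sharp \otimes \k(G)$ is $(\rho \otimes \rho_\ell)(H)$-stable, whence uniqueness of the ideal of $R$ from which it is extended forces $\rho(h)(J^\sharp) = J^\sharp$ for each $h \in H$. Lastly,
\[
(J^\sharp)^\natural = \Delta_R^{-1}(J^\sharp \otimes \k(G)) \cap (R \otimes \k(G)^H) = J^e \cap (R \otimes \k(G)^H) = J,
\]
the last equality again by faithful flatness of $\k(G)$ over $\k(G)^H$. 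The main obstacle I anticipate is the extension-from-$R$ statement for $(\Id \otimes \rho_r)(G)$-stable ideals of $R \otimes \k(G)$, particularly in the non-connected case where $\k(G)$ is merely a direct product of fields; once \cite[Lemma~6.3]{nVE98} is marshalled to cover this, everything else is formal manipulation with the two intertwining identities \eqref{E:reg1} and \eqref{E:reg2}.
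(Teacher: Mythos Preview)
The paper does not give its own proof of Proposition~\ref{P:nat}; the text preceding the statement establishes well-definedness of $I \mapsto I^\natural$ and the identity $(I^\natural)^\sharp = I$ for $H$-stable semiprime $I$, and then for the remaining direction the paper simply refers the reader to Vonessen~\cite[Theorem~6.6(a)]{nVE98}. Your sketch is a correct reconstruction of that argument: the essential content is precisely the two applications of the ``descent'' principle for invariant ideals (Vonessen~\cite[Lemma~6.3]{nVE98}), once to identify $\Delta_R(J^e)$ as an ideal extended from $R$ via $(\Id\otimes\rho_r)(G)$-stability, and once to recover $J$ from $J^e$ via $(\Id\otimes\rho_\ell)(H)$-stability, with the intertwining formulas \eqref{E:reg1} and \eqref{E:reg2} mediating between the two pictures. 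One small point to tighten: your claim that $J^e$ is semiprime does not follow from faithful flatness alone (inseparable field extensions can create nilpotents); what is actually needed is that $\k(G)$ is separable over $\k(G)^H$, which holds because the quotient morphism $G \to H\backslash G$ is smooth. Once that is in place, the rest is indeed formal.
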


For the complete proof, see \cite[Theorem 6.6(a)]{nVE98}.


\section{Proof of Theorem~\ref{T:orbit}} \label{S:proof}

\subsection{Proof of Theorem~\ref{T:orbit} (b) $\Rightarrow$ (a)}
Let $P \in \Rat R$ be given such that $I = P\byG$ is a locally
closed point of $\GSpec R$. Then the preimage $\gamma^{-1}(I) =
\Spec_IR$ under the continuous map \eqref{E:spec} is a locally
closed subset of $\Spec R$, and hence so is $\Spec_IR \cap \bar{\{ P
\}}$. By Proposition~\ref{P:maximal}, $\Spec_IR \cap \bar{\{ P \}} =
\{ P \}$, which proves (a).

\subsection{Proof of Theorem~\ref{T:orbit} (a) $\Rightarrow$ (b)}

Besides making crucial use of Theorem~\ref{T:fibre}, our proof
closely follows Vonessen \cite[Sect.~8]{nVE98} which in turn is
based on M{\oe}glin and Rentschler \cite[Sect.~3]{cMrR81}.

\subsubsection{Some reductions} \label{SSS:reductions}
First, recall that the connected component of the identity in $G$ is
always a normal subgroup of finite index in $G$. Therefore,
Lemma~\ref{L:finiteindex} allows us to assume that $G$ is connected
and we will do so for the remainder of this section.

We are given an ideal $P \in \Rat R$ satisfying \eqref{E:locclosed}
and our goal is to show that $P\byG$ is distinct from the
intersection of all $G$-primes of $R$ which properly contain
$P\byG$; see \eqref{E:Glocclosed}. For this, we may clearly replace
$R$ by $R/P\byG$. Hence we may assume that
$$
P\byG = 0 \ .
$$
Thus, the algebra $R$ is prime by \cite[Proposition 19(a)]{mL08},
and so $\cC(R)$ is a field. Our goal now is to show that
the intersection of all nonzero $G$-primes of $R$ is nonzero again.
Note that this intersection is identical to the intersection of all
nonzero $G$-stable semiprime ideals of $R$, because $G$-stable
semiprimes are exactly the intersections of $G$-primes. The
intersection in question is also identical to the intersection of
all nonzero $G$-stable prime ideals of $R$, because $G$-primes are
the same as $G$-stable primes of $R$ by \cite[Proposition
19(a)]{mL08}.

\subsubsection{The main lemma} 
Let $\til{R} = R\cC(R)$ denote the central closure of $R$. In the
lemma below, which corresponds to Vonessen \cite[Prop.~8.7]{nVE98},
we achieve our goal for $\til{R}$ in place of $R$:

\begin{lem} \label{L:goalRC}
The intersection of all nonzero $G$-stable semiprime ideals of
$\til{R}$ is nonzero.
\end{lem}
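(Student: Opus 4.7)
The plan is to leverage the commutative reformulation of the $G$-stratum $\Spec_0 R$ provided by Theorem~\ref{T:fibre}, together with the lying-over result of Proposition~\ref{L:VE}, to construct a uniform lower bound on the nonzero $G$-stable semiprime ideals of $\til{R}$.

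First, I would apply Theorem~\ref{T:fibre} with $I = 0$: the bijection $c \colon \Spec_0 R \iso \Spec^G(T_0)$ carries the rational ideal $P$ to a maximal $G$-stable prime $\fm := c(P)$ of $T_0 = C \otimes K$ (with $C = \cC(R)$ and $K = \k(G)$), satisfying $T_0/\fm \cong K$. The composite $C \hookrightarrow T_0 \twoheadrightarrow K$ then provides a $G$-equivariant embedding of fields $\psi \colon C \hookrightarrow K$, where $G$ acts on $C$ via $\rho$ and on $K$ via $\rho_r$. Since $K = \Fract \k[G]$ with $\k[G]$ a $\rho_r(G)$-stable, $G$-simple (by the transitivity of right translation on the connected group $G$), affine $\k$-subalgebra, Proposition~\ref{L:VE} applies to $U = R$, yielding a nonzero ideal $D \trianglelefteq R$ such that every $G$-stable prime of $R$ not containing $D$ is contracted from a $G$-stable prime of $\til{R}$.

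Second, local closedness of $P$ in $\Spec R$ gives the ideal $J := \bigcap\{Q \in \Spec R : Q \supsetneqq P\} \supsetneqq P$, while Theorem~\ref{T:fibre}(b) combined with maximality of $\fm$ in $\Spec^G(T_0)$ forces $P$ to be maximal inside the stratum $\Spec_0 R$; consequently every prime $Q \in \Spec R$ properly containing $P$ satisfies $Q\byG \neq 0$ and therefore lies in some nonzero stratum $\Spec_{K'}R$ with $K' \in \GSpec R$, $K' \neq 0$.

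Third, I would combine these ingredients to exhibit a fixed nonzero ideal of $\til{R}$ lying inside every nonzero $G$-stable semiprime $\til{I}$ of $\til{R}$. The starting observation is that $I := \til{I} \cap R$ is a nonzero (by the standard Martindale-type argument) $G$-stable semiprime of $R$, and $G$-stability together with $P\byG = 0$ forces $I \not\subseteq P$. Using Proposition~\ref{L:VE} to lift back to $\til{R}$ those minimal primes of $I$ that avoid $D$, and exploiting the fact that every prime of $R$ strictly above $P$ has a nonzero $\byG$, one argues that $\til{I}$ must contain a canonical nonzero ideal of $\til{R}$ built from $D$ and the $G$-orbit sum $\sum_{g \in G} g.J$.

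The main obstacle, and the technical heart of the argument, is precisely this final combining step: bridging the gap between the mere fact that $I \not\subseteq P$ (i.e.\ $I + P \supsetneqq P$) and the stronger conclusion that $\til{I}$ contains a uniformly chosen nonzero ideal. Local closedness of $P$ directly constrains primes of $R$ above $P$, none of which lies in $\Spec_0 R$, so Theorem~\ref{T:fibre} cannot be applied to them directly; the required bridge is the lifting mechanism supplied by Proposition~\ref{L:VE}, applied simultaneously to the various nonzero strata $\Spec_{K'}R$ appearing above $P$, together with a careful use of the order-isomorphism in Theorem~\ref{T:fibre}(b) on each stratum.
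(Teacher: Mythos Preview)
Your proposal has a genuine gap: the third step is not an argument but a hope, and the tools you invoke are aimed in the wrong direction.

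Proposition~\ref{L:VE} lets you lift $G$-stable primes of $R$ (that avoid $D$) to $G$-stable primes of $\til{R}$. In the paper this is used \emph{after} Lemma~\ref{L:goalRC} is established, to descend the conclusion from $\til{R}$ down to $R$; it is not used to prove the lemma itself. In your sketch you take a nonzero $G$-stable semiprime $\til{I}\trianglelefteq\til{R}$, set $I=\til{I}\cap R$, and then propose to ``lift back to $\til{R}$ those minimal primes of $I$ that avoid $D$.'' But the primes you produce this way have no reason to sit inside $\til{I}$, so they give no lower bound for it. Moreover, the minimal primes of $I$ need not contain $P$ at all, so your observation that primes strictly above $P$ have nonzero $\byG$ gives you no purchase on them; in particular there is no reason for $I$ (or $\til{I}$) to contain $J$ or the ``orbit sum'' $\sum_{g}g.J$. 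What you would really need is that the intersection of all nonzero $G$-stable semiprimes of $R$ is nonzero---but that is precisely the final goal of the whole section, which the paper reaches only by first proving Lemma~\ref{L:goalRC} for $\til{R}$ and then invoking Proposition~\ref{L:VE}. Your outline is therefore circular.

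The paper's proof follows an entirely different route. It passes through the $\natural$-construction of \S\ref{SS:nat} with $H=G_P$: local closedness of $P$ translates, via the inclusion-preserving bijection of Proposition~\ref{P:nat}, into the statement that the intersection of all nonzero $(\rho\otimes\rho_r)(G)$-stable semiprime ideals of $(R\otimes K^{G_P})/P^{\natural}$ is nonzero. The map $\psi_P\colon C\hookrightarrow K$ you correctly construct from Theorem~\ref{T:fibre}(c)(d) is then used, not to feed Proposition~\ref{L:VE}, but to show that $\Im\psi_P$ has $K^{G_P}$ as a finite purely inseparable extension (via Vonessen \cite[Prop.~4.5]{nVE98}), which yields a \emph{finite centralizing} $G$-equivariant embedding $\til{R}\hookrightarrow (R\otimes K^{G_P})/P^{\natural}$. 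A standard transfer result for finite centralizing extensions (Vonessen \cite[Lemma~8.6]{nVE98}) then pulls the nonvanishing intersection down to $\til{R}$. The missing ingredients in your plan are exactly Proposition~\ref{P:nat} and this finite centralizing embedding; Proposition~\ref{L:VE} should be saved for the subsequent passage from $\til{R}$ to $R$.
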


\begin{proof}
Let $G_P$ denote the stabilizer of $P$ in $G$ and recall that $G_P$
is a closed subgroup of $G$ \cite[I.2.12(5)]{jcJ03}. Since $P$ is
locally closed, $P$ is distinct from the intersection of all
$G_P$-stable semiprime ideals of $R$ that properly contain $P$. By
Proposition~\ref{P:nat} with $H = G_P$, we conclude that the ideal
$P^\natural \in \Spec\left( R \otimes K^{G_P} \right)$ is distinct
from the intersection of all $(\rho \otimes \rho_r)(G)$-stable
semiprime ideals of $R \otimes K^{G_P}$ that properly contain
$P^\natural$. Here, we have put $K = \k(G)$ and $K^{G_P} =
\k(G_P\backslash G)$ denotes the invariant subfield of $K$ for the
left regular action $\rho_\ell\big|_{G_P}$ as in \S\ref{SS:nat}. In
other words, the intersection of all nonzero $(\rho \otimes
\rho_r)(G)$-stable semiprime ideals of $(R \otimes
K^{G_P})/P^\natural$ is nonzero. By Vonessen \cite[Lemma
8.6]{nVE98}, the lemma will follow if we can show that there is a
finite centralizing embedding
\begin{equation} \label{E:embed}
\til{R} \into (R \otimes K^{G_P})/P^\natural
\end{equation}
such that the $G$-action via $\rho \otimes \rho_r$ on $(R \otimes
K^{G_P})/P^\natural$ restricts to the $G$-action on $\til{R}$ via
the unique extension of $\rho$.

To construct the desired embedding, write $C = \cC(R)$  and consider
the $\k$-algebra map
\begin{equation} \label{E:psiP}
\psi_P \colon C \into T_0 = C \otimes K \onto T_0/c(P)
\iso K \ ,
\end{equation}
where the first two maps are canonical and the last map is the
$K$-isomorphism in Theorem~\ref{T:fibre}(d); it is given by the
isomorphism $\Psi_P$ in Theorem~\ref{T:fibre}(c). (We remark that
the map $\psi_P$ is identical to the one constructed in
\cite[Theorem 22]{mL08}.) The identities for $\Psi_P$ in
Theorem~\ref{T:fibre}(c) yield the following formulas:
\begin{align}
\psi_P \circ \rho(g)\big|_{C} &= \rho_r(g)\circ \psi_P\,,
\label{E:reg1''} \\
\psi_{g.P} &= \rho_\ell(g)\circ \psi_P\,. \label{E:reg2''}
\end{align}
(See also (a) in the proof of \cite[Theorem 22]{mL08}.) Consider the
subfield
\begin{equation*}
K_P = \Im \psi_P \subseteq K \ .
\end{equation*}
Equation \eqref{E:reg1''} implies that $K_P$ is a $\rho_r(G)$-stable
subfield of $K$. More precisely, identity \eqref{E:reg2''} shows
that
$$
K_P \subseteq K^{G_P}  \ .
$$
Moreover, if $g \notin G_P$ then $c(g.P) \neq c(P)$ and hence
$\psi_{g.P} \neq \psi_P$. Therefore, we deduce from \eqref{E:reg2''}
that $G_P = \{ g \in G \mid \rho_\ell(g)(x) = x \text{ for all $x
\in K_P$}\}$. Now Vonessen \cite[Proposition 4.5]{nVE98} gives that
the field extension $K^{G_P}/K_P$ is finite (and purely
inseparable). Thus, the desired embedding \eqref{E:embed} will
follow if we can show that there is a $(\rho \otimes
\rho_r)(G)$-equivariant isomorphism
$$
(R \otimes K^{G_P})/P^\natural \cong \til{R} \otimes_C K^{G_P}\ ,
$$
where $c \otimes 1 = 1 \otimes \psi_P(c)$ holds for all $c \in C$ in
the ring on the right. But the map $\til{R} \otimes K \onto \til{R}
\otimes_C K \cong \til{R} \otimes_C (T_0/c(P))$ has kernel
$(e^{-1}\circ d)(P)$ in the notation of \eqref{E:bijections}, and it
is $(\rho \otimes \rho_r)(G)$-equivariant. The restriction of this
map to $R \otimes K^{G_P}$ has image $\til{R} \otimes_C K^{G_P}$ and
kernel $(e^{-1}\circ d)(P) \cap (R \otimes K^{G_P}) = P^\natural$.
This finishes the proof of the lemma.
\end{proof}

\subsubsection{End of proof} 
We now complete the proof of Theorem~\ref{T:orbit} by showing that
the intersection of all nonzero $G$-stable prime ideals of $R$ is
nonzero.

We use the $G$-equivariant embedding $\psi_P \colon C \into K =
\k(G)$; see \eqref{E:psiP} and \eqref{E:reg1''}. Note that $K =
\Fract A$ where $A = \k[G]$ is a $G$-stable affine domain over $\k$
whose maximal ideals form one $G$-orbit. Therefore, $A$ is
$G$-simple. By Proposition~\ref{L:VE}, there exists an ideal $0 \neq
D \trianglelefteq R$ such that, for every $G$-stable prime $P$ of
$R$ not containing $D$, there is a $G$-stable prime of $\til{R}$
lying over $P$.

Now let $N$ denote the intersection of all nonzero $G$-stable
semiprime ideals of $\til{R}$; so $N \neq 0$ by Lemma~\ref{L:goalRC}
and hence $N \cap R \neq 0$ by \cite[Prop.~2(ii)(iii)]{mL08}. We
conclude from the preceding paragraph that every nonzero $G$-stable
prime ideal of $R$ either contains $D$ or else it contains $N \cap
R$. Therefore, the intersection of all nonzero $G$-stable prime
ideals of $R$ contains the ideal $D \cap N \cap R$ which is nonzero,
because $R$ is prime. This completes the proof of
Theorem~\ref{T:orbit}. \qed


\section{Finiteness of $\GSpec R$} \label{S:finiteness}

\subsection{} \label{SS:finiteness}
The following finiteness result is an application of
Theorem~\ref{T:orbit} 
and \cite[Theorem 1]{mL08}.

\begin{prop} \label{P:finiteness}
Assume that $R$ satisfies the Nullstellensatz. Then the following
conditions are equivalent:
\begin{enumerate}
\item $R$ has finitely many $G$-stable semiprime ideals;
\item $\GSpec R$ is finite;
\item $\GRat R$ is finite;
\item $G$ has finitely many orbits in $\Rat R$;
\item  $R$ satisfies
    \begin{minipage}[t]{4.6in}
    \begin{enumerate}
    \item[(i)] the ascending chain condition for $G$-stable semiprime ideals,
    \item[(ii)] the Dixmier-M{\oe}glin equivalence, and
    \item[(iii)] $\GRat R = \GSpec R$.
    \end{enumerate}
    \end{minipage}
\end{enumerate}
If these conditions are satisfied then rational ideals of $R$ are
exactly the primes that are maximal in their $G$-strata.
\end{prop}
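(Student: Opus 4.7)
The plan is to establish the cycle (a) $\Rightarrow$ (b) $\Rightarrow$ (c) $\Leftrightarrow$ (d) together with (c) $\Rightarrow$ (b) and (b) $\Leftrightarrow$ (e), and then deduce the concluding assertion from Proposition~\ref{P:maximal} and Corollary~\ref{C:maximal}. The equivalence (a) $\Leftrightarrow$ (b) is formal: closed subsets of $\GSpec R$ correspond bijectively to $G$-stable semiprime ideals of $R$ by \eqref{E:Gclosure}, and distinct points of $\GSpec R$ have distinct closures, so finiteness of either set forces finiteness of the other. The inclusion $\GRat R \subseteq \GSpec R$ gives (b) $\Rightarrow$ (c), while (c) $\Leftrightarrow$ (d) is immediate from the bijection \eqref{E:rat}.

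For (c) $\Rightarrow$ (b), I would invoke the Nullstellensatz: every $P \in \Spec R$ is an intersection of primitive ideals, each of which is rational, so $P = \bigcap\{M \in \Rat R : M \supseteq P\}$. Since each $g \in G$ acts as an automorphism, interchanging intersections yields
$$
P\byG \;=\; \bigcap\bigl\{M\byG : M \in \Rat R,\ M \supseteq P\bigr\},
$$
which shows that every element of $\GSpec R$ is an intersection of elements of $\GRat R$; hence $|\GSpec R| \le 2^{|\GRat R|}$. For (b) $\Rightarrow$ (e), condition (i) is immediate. For (iii), given $I \in \GSpec R$, the argument just given expresses $I$ as a finite intersection $J_1 \cap \cdots \cap J_n$ of elements $J_k \in \GRat R$; then $J_1 \cdots J_n \subseteq I$ together with the $G$-primality of $I$ forces $J_k \subseteq I$ for some $k$, whence $I = J_k \in \GRat R$. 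For (ii), observe that when $\GSpec R$ is finite, each $I \in \GSpec R$ is locally closed in $\GSpec R$: the finite intersection $\bigcap\{J \in \GSpec R : J \supsetneq I\}$ strictly contains $I$ by $G$-primality (or equals $R$ if this set is empty). Combined with Theorem~\ref{T:orbit}, this makes every rational ideal locally closed in $\Spec R$, thereby completing the Dixmier-M{\oe}glin equivalence.

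The implication (e) $\Rightarrow$ (b) is where the main obstacle lies. Condition (i) translates, via the $\GSpec$-analog of \eqref{E:correspondence}, into the statement that $\GSpec R$ is a noetherian topological space. Conditions (ii) and (iii) combined with Theorem~\ref{T:orbit} imply that every point of $\GSpec R$ is locally closed: each $I \in \GSpec R = \GRat R$ lifts via \eqref{E:rat} to a $P \in \Rat R$ with $P\byG = I$, the Dixmier-M{\oe}glin equivalence makes $P$ locally closed in $\Spec R$, and Theorem~\ref{T:orbit} transports local closedness back to $I$ in $\GSpec R$. It then remains to establish the purely topological fact that a noetherian space $X$ in which every point is locally closed must be finite. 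I would prove this by contradiction: the noetherian property supplies a minimal infinite closed subset $Y \subseteq X$, and $Y$ decomposes into finitely many irreducible components $Y_i = \overline{\{\eta_i\}}$; by local closedness, $\overline{\{\eta_i\}} \setminus \{\eta_i\}$ is closed in $X$ and is properly contained in $Y$ (otherwise $Y_i = Y$ and $\eta_i \notin Y$, contradicting $\eta_i \in Y_i \subseteq Y$), so it is finite by minimality of $Y$. Each $Y_i$ is therefore finite, whence $Y$ is finite, contradicting the choice of $Y$.

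Finally, the concluding assertion follows from tools already on hand: Proposition~\ref{P:maximal} shows that rational ideals are always maximal in their $G$-strata, while under the equivalent condition (b) every $I \in \GSpec R$ is locally closed (as established above), so Corollary~\ref{C:maximal} identifies the maximal elements of each stratum $\Spec_I R$ with the rational ideals contained in it.
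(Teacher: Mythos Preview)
Your proof is correct and follows essentially the same approach as the paper; in particular, your (e) $\Rightarrow$ (b) via a minimal infinite closed subset is exactly the paper's noetherian induction recast in topological language, with Theorem~\ref{T:orbit} playing the same pivotal role. One small point you should make explicit: writing $Y_i = \overline{\{\eta_i\}}$ presupposes that irreducible closed subsets of $\GSpec R$ have generic points, which is true (the intersection of all $J$ in an irreducible closed set is itself $G$-prime, by the same argument as for ordinary $\Spec$) but not automatic for arbitrary noetherian spaces.
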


\begin{proof}
The implications (a) $\Rightarrow$ (b) $\Rightarrow$ (c) are trivial
and (c) $\Leftrightarrow$ (d) is clear from \eqref{E:rat}. Moreover,
the Nullstellensatz implies that the  $G$-stable semiprime ideals of
$R$ are exactly the intersections of $G$-rational ideals. Thus, (c)
implies (a), and hence conditions (a) - (d) are all equivalent.

We now show that (a) - (d) imply (e).  First, (i) is trivial from
(a). For (ii), note that (b) implies that all points of $\GSpec R$
are locally closed. Hence all rational ideals of $R$ are locally
closed in $\Spec R$ by Theorem~\ref{T:orbit}, proving (ii). Finally,
in order to prove (iii), write a given $I \in \GSpec R$ as an
intersection of $G$-rational ideals. The intersection involves only
finitely many members by (c), and so one of them must be equal to
$I$ by $G$-primeness. Thus, $I \in \GRat R$ which takes care of
(iii).

To complete the proof of the equivalence of (a) - (e), we will show
that (e) implies (b). By a familiar argument, hypothesis (i) allows
us to assume that the algebra $R$ is $G$-prime and $\GSpec R/I$ is
finite for all nonzero $G$-stable semiprime ideals $I$ of $R$. By
(iii) and \eqref{E:rat}, we know that $P\byG = 0$ holds for some $P
\in \Rat R$. Since $P$ is locally closed in $\Spec R$ by (ii),
Theorem~\ref{T:orbit} implies that $0$ is locally closed in $\GSpec
R$, that is, $I = \bigcap_{0 \neq J \in \GSpec R } J$ is nonzero;
see \eqref{E:Glocclosed}. Therefore, $\GSpec R = \{0\} \cup \GSpec
R/I$ is finite.

Finally, the last assertion is clear from Corollary~\ref{C:maximal},
because all points of $\GSpec R$ are locally closed by (b).
\end{proof}

\subsection{} \label{SS:torus}
We now concentrate on the case where $G$ is an algebraic torus: $G
\cong \Gm^n$ with $\Gm = \k^*$. In particular, $G$ is connected and
so $\GSpec R$ is simply the set of all $G$-stable primes of $R$ by
\cite[Proposition 19(a)]{mL08}. Moreover, every $G$-module $M$ has
the form
$$
M = \bigoplus_{\lambda \in X(G)} M_\lambda \ ,
$$
where $X(G)$ is the collection of all morphisms of algebraic groups
$\lambda \colon G \to \Gm$ 
and
$$
M_\lambda = \{ m \in M \mid g.m = \lambda(g)m \text{ for all $g \in
G$}\}
$$
is the set of $G$-eigenvectors of weight $\lambda$ in $M$.

\begin{lem} \label{L:1}
If $\dim_\k R_\lambda \le 1$ holds for all $\lambda \in X(G)$ then
$\GSpec R = \GRat R$.
\end{lem}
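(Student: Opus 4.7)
The plan is direct: given $I \in \GSpec R$, use connectedness of the torus to conclude that $I$ is actually a $G$-stable prime (by \cite[Prop.~19(a)]{mL08}), pass to $S = R/I$, and show directly that every $G$-invariant element of the field $\cC(S)$ is a scalar. The hypothesis transfers: since $I$ is $G$-stable, we have $S_\lambda = R_\lambda/I_\lambda$, so $\dim_\k S_\lambda \le 1$ for each $\lambda \in X(G)$.

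Now take $q \in \cC(S)^G$. By the standard characterization of the extended centroid (\cite[Prop.~2(ii)]{mL08}), there is a nonzero ideal $J \trianglelefteq S$ with $qJ \subseteq S$. I would first replace $J$ by the $G$-stable ideal $J' = \sum_{g \in G} g.J$; this works because $G$-invariance of $q$ gives $q(g.J) = g.(qJ) \subseteq g.S = S$, so $qJ' \subseteq S$. Rationality of the action produces the weight decomposition $J' = \bigoplus_\mu J'_\mu$, and since $J' \neq 0$, some $J'_\mu$ contains a nonzero element $r$.

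The key observation is now a one-line computation: since $g.q = q$, we have $g.(qr) = q\cdot(g.r) = \mu(g)\,qr$ for all $g \in G$, so $qr$ lies in $S_\mu$. Because $\dim_\k S_\mu \le 1$ and $0 \neq r \in S_\mu$, there exists $\alpha \in \k$ with $qr = \alpha r$. Thus $(q-\alpha)r = 0$ in the Amitsur–Martindale ring of quotients of $S$. Since $S$ is prime, $\cC(S)$ is a field (see \S\ref{SS:C}), so $q - \alpha$ is either zero or invertible in $\cC(S)$; invertibility would force $r = 0$, contradicting the choice of $r$. Hence $q = \alpha \in \k$, which shows $\cC(S)^G = \k$ and therefore $I \in \GRat R$.

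The argument is essentially routine once the pieces are assembled; the only point requiring a little care is the legitimacy of replacing $J$ by its $G$-saturation and the compatibility of the $G$-action with the weight structure on $S$ — but both rely only on the extension of the $G$-action from $S$ to its Amitsur–Martindale ring of quotients, recalled in \S\ref{SS:C}, and on the rationality of the action. No appeal to the deeper machinery of Sections~\ref{S:fibres} or \ref{S:proof} is needed.
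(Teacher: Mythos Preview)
Your proof is correct and follows essentially the same route as the paper: reduce to a prime quotient, pick a $G$-eigenvector $r$ in a nonzero $G$-stable ideal on which $q$ acts, and use the weight-space hypothesis to force $qr \in \k r$. The only cosmetic difference is that the paper takes directly the denominator ideal $\{s \in S \mid qs \in S\}$, which is automatically $G$-stable since $q$ is $G$-invariant, making your saturation step $J \mapsto J'$ unnecessary.
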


\begin{proof}
Let $P \in \GSpec R$ be given. The condition $\dim_\k R_\lambda \le
1$ for all $\lambda \in X(G)$ passes to $R/P$. Therefore, replacing
$R$ by $R/P$, we may assume that $R$ is prime and we must show that
$\cC(R)^G = \k$. For a given $q \in \cC(R)^G$ put $I = \{ r \in R
\mid qr \in R \}$; this is a nonzero $G$-invariant ideal of $R$.
Hence $I = \bigoplus_{\lambda \in X(G)} I_\lambda$ and so there is a
nonzero element $r \in I_\lambda$ for some $\lambda$. Since $q$ is
$G$-invariant, we have $qr \in R_\lambda = \k r$. Therefore, $(q-k)r
= 0$ holds in the central closure $\til{R}$ for some $k \in \k$.
Inasmuch as nonzero elements of $\cC(R)$ are units in $\til{R}$, we
conclude that $q = k \in \k$, which proves the lemma.
\end{proof}

\subsubsection{Example: affine  commutative algebras} \label{SSS:comm}
The following proposition is a standard result on $G$-varieties
 \cite[II.3.3 Satz 5]{hK84}. It is also immediate from the foregoing:

\begin{prop} \label{P:comm}
Let $R$ be an affine commutative domain over $\k$ and let $G$ be an
algebraic $\k$-torus acting rationally on $R$. Then the following
are equivalent:
\begin{enumerate}
\item[(i)] $\GSpec R$ is finite;
\item[(ii)] $(\Fract R)^G = \k$;
\item[(iii)] $\dim_\k R_\lambda \le 1$ for all $\lambda \in X(G)$.
\end{enumerate}
\end{prop}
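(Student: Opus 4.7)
The plan is to interpret the three equivalences through Lemma~\ref{L:1} and Proposition~\ref{P:finiteness}. First I would observe that for $R$ as given---affine commutative over an algebraically closed field $\k$---every hypothesis needed from the preceding machinery is automatic. Indeed, $R$ satisfies the Nullstellensatz by the classical Hilbert Nullstellensatz, the ascending chain condition holds because $R$ is noetherian, and the Dixmier-M{\oe}glin equivalence is trivial here: a prime $P$ of a commutative affine $\k$-algebra is rational iff $R/P = \k$ iff $P$ is maximal iff $P$ is primitive, and in a Jacobson ring a prime is locally closed iff it is maximal. Also, for the commutative domain $R$ one has $\cC(R) = \Fract R$, which is what lets us rephrase $G$-rationality of the zero ideal as condition (ii).

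Next I would handle (ii) $\Leftrightarrow$ (iii) directly. For (ii) $\Rightarrow$ (iii), take nonzero $r, s \in R_\lambda$; then $g.(r/s) = \lambda(g)r/\lambda(g)s = r/s$ in $\Fract R$, so $r/s \in (\Fract R)^G = \k$, whence $r \in \k s$ and $\dim_\k R_\lambda \le 1$. For (iii) $\Rightarrow$ (ii), apply Lemma~\ref{L:1}: since $R$ is a domain, $0 \in \GSpec R$, and the lemma gives $\GSpec R = \GRat R$, so $0$ is $G$-rational, i.e., $\cC(R)^G = (\Fract R)^G = \k$.

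Finally I would derive (i) $\Leftrightarrow$ (ii) from Proposition~\ref{P:finiteness}. For (ii) $\Rightarrow$ (i): from (ii) we have (iii), so Lemma~\ref{L:1} yields $\GRat R = \GSpec R$, which is condition (iii) of part (e) of Proposition~\ref{P:finiteness}; combined with the two other conditions in (e), both of which are automatic in the present setting as noted above, Proposition~\ref{P:finiteness} gives that $\GSpec R$ is finite. For (i) $\Rightarrow$ (ii): finiteness of $\GSpec R$ is condition (b) of Proposition~\ref{P:finiteness}, which by that proposition forces $\GRat R = \GSpec R$; in particular $0 \in \GSpec R$ is $G$-rational, so $(\Fract R)^G = \cC(R)^G = \k$.

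There is no real obstacle: the statement is essentially an assembly of the previously developed results, and the only non-mechanical observation is the one-line verification of the Dixmier-M{\oe}glin equivalence for commutative affine algebras.
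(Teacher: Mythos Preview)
Your proposal is correct and uses essentially the same ingredients as the paper's proof: the automatic verification of the Nullstellensatz, ACC, and Dixmier--M{\oe}glin equivalence for affine commutative algebras, the identification $\cC(R)=\Fract R$, Lemma~\ref{L:1}, and Proposition~\ref{P:finiteness}. The paper organizes the implications as the cycle (iii)$\Rightarrow$(i)$\Rightarrow$(ii)$\Rightarrow$(iii), whereas you prove (ii)$\Leftrightarrow$(iii) and (i)$\Leftrightarrow$(ii) separately, but the content is the same.
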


\begin{proof}
Since affine commutative algebras satisfy the Nullstellensatz, the
Dixmier-M{\oe}glin equivalence and the ascending chain condition for
ideals, Proposition~\ref{P:finiteness} tells us that (i) amounts to
the equality $\GSpec R = \GRat R$. The implication (iii)
$\Rightarrow$ (i) therefore follows from Lemma~\ref{L:1}.
Furthermore, (i) implies that $0 \in \GRat R$; so $\cC(R)^G = \k$.
Since $\cC(R) = \Fract R$, (ii) follows. Finally, if $a,b \in
R_\lambda$ are linearly independent then $a/b \in \Fract R$ is not a
scalar. Hence (ii) implies (iii).
\end{proof}

\subsubsection{Example: quantum affine toric varieties} \label{SSS:qtoric}
Affine domains $R$ with a rational action by an algebraic torus $G$
satisfying the condition $\dim_\k R_\lambda \le 1$ for all $\lambda
\in X(G)$ as in Lemma~\ref{L:1} are called \emph{quantum affine
toric varieties} in Ingalls \cite{cIxx}.

A particular example is \emph{quantum affine space} $R =
\cO_\bq(\k^n) = \k[x_1,\dots,x_n]$. Here, $\bq$ denotes family of
parameters $q_{ij} \in \k^*$ $(1 \le i < j \le n)$ and the defining
relations of $R$ are
$$
x_j x_i = q_{ij} x_i x_j \qquad (i < j) \ .
$$
The torus $G = \Gm^n$ acts on $R$, with $\alpha =
(\alpha_1,\dots,\alpha_n) \in G$ acting by
$$
\alpha.x_i = \alpha_i x_i
$$
for all $i$. The standard PBW-basis of $R$,
$$
\{ x^\bfm = x_1^{m_1}x_2^{m_2}\dots x_n^{m_n} \mid \bfm =
(m_1,\dots,m_n) \in \ZZ_{\ge 0}^n \}\ ,
$$
consists of $G$-eigenvectors: $x^\bfm \in R_{\lambda_\bfm}$ with
$$
\lambda_\bfm(\alpha) = \alpha^\bfm =
\alpha_1^{m_1}\alpha_2^{m_2}\dots \alpha_n^{m_n}
$$
Therefore, the condition $\dim_\k R_\lambda \le 1$ for all $\lambda$
is satisfied. Moreover, $R = \cO_\bq(\k^n)$ is noetherian and
satisfies the Dixmier-Moeglin equivalence as long as $\k$ contains a
non-root of unity \cite[II.8.4]{kBkG02}.

Any quantum affine toric variety $R$ is a quotient of some
$\cO_\bq(\k^n)$ \cite[p.~6]{cIxx}. Hence, $R$ is noetherian and
satisfies the Dixmier-Moeglin equivalence (in the presence of
non-roots of unity). Therefore, $\GSpec R$ is finite by
Proposition~\ref{P:finiteness} and Lemma~\ref{L:1}.

\subsubsection{Example: quantum $2 \times 2$ matrices} \label{SSS:q2x2}
Let $R = \cO_q(\Mat_2)$; this is the algebra with generators $a, b,
c, d$ and defining relations
\begin{align*}
ab &= qba & ac &= qca & bc &= cb \\
bd &= qdb & cd &= qdc & ad - da &= (q - q^{-1})bd\ .
\end{align*}
The torus $\Gm^4$ acts on $R$ as in \cite[II.1.6(c)]{kBkG02}, with
$D= \{ (\alpha,\alpha,\alpha^{-1},\alpha^{-1}) \mid \alpha \in \k^*
\}$ acting trivially. Thus, $G = \Gm^4/D \cong \Gm^3$ acts on $R$:
\begin{align*}
(\alpha,\beta,\gamma).a &= \beta a & (\alpha,\beta,\gamma).b &=
\gamma b  \\
(\alpha,\beta,\gamma).c &= \alpha\beta c & (\alpha,\beta,\gamma).d
&= \alpha\gamma d
\end{align*}
This action does not satisfy condition $\dim_\k R_\lambda \le 1$ for
all $\lambda \in X(G)$. Indeed, the PBW-basis $\{ a^ib^jc^ld^m \mid
i,j,l,m \in \ZZ_{\ge 0} \}$ of $R$ consists of $G$-eigenvectors:
 $a^ib^jc^ld^m$ corresponds to the character
$(\alpha,\beta,\gamma) \mapsto \alpha^{l+m}\beta^{i+l}\gamma^{j+m}$.
Defining
$$
f \colon \ZZ^4 \tto \ZZ^3\ ,\qquad (i,j,l,m) \mapsto (l+m,i+l,j+m)
$$
we have, for any given $\lambda = (\lambda_1, \lambda_2,\lambda_3)
\in \ZZ^3$,
$$
\dim_\k R_\lambda = \# \left( f^{-1}(\lambda) \cap \ZZ_{\ge 0}^4
\right) \ .
$$
In order to determine this number, note that $\Ker f = \ZZ
(1,-1,-1,1)$. Hence, we must count the possible $t \in \ZZ$ so that
$z_\lambda + t(1,-1,-1,1) \in \ZZ_{\ge 0}^4$ where we have put
$z_\lambda = (\lambda_2 - \lambda_1, \lambda_3, \lambda_1, 0)$. We
obtain the following conditions on $t$: $\lambda_2 - \lambda_1 + t
\ge 0$, $\lambda_3 - t \ge 0$, $\lambda_1 - t \ge 0$, and $t \ge 0$.
Therefore,
\begin{equation} \label{E:dim}
\dim_\k R_\lambda = \max \left\{ 0, \min \{ \lambda_1, \lambda_3 \}
- \max \{ \lambda_1 - \lambda_2,0 \} + 1 \right\}
\end{equation}
which can be arbitrarily large.

Now consider the algebra $\bar{R} = R/(D_q)$ where $D_q = ad - q bc
\in \cen R$ is the quantum determinant; see \cite[I.1.9]{kBkG02}.
Since $D_q \in R_\pi$ with $\pi = (1,1,1)$, we have
$$
\dim_\k \bar{R}_\lambda = \dim_\k R_\lambda - \dim_\k R_{\lambda -
\pi} \le 1
$$
by \eqref{E:dim}. Therefore $\dim_\k \bar{R}_\lambda \le 1$ for all
$\lambda \in X(G)$. Moreover, assuming $q$ to be a non-root of
unity, $\bar{R}$ satisfies the Nullstellensatz and the
Dixmier-M{\oe}glin equivalence. Indeed, the algebra $\bar{R}$ is an
image of quantum $4$-space, since $ad \equiv q^2 da \bmod D_q$.
Therefore, we know from Proposition~\ref{P:finiteness} and
Lemma~\ref{L:1} that there are finitely many $G$-primes of $R$ that
contain $D_q$. The remaining $G$-primes correspond to $\GSpec
\cO_q(\GL_2)$, and by \cite[Exercise II.2.M]{kBkG02}, there are only
four of these.


\begin{ack}
The author would like to thank Temple University for granting him a
research leave during the Fall Semester 2008 when the work on this
article was completed. Thanks are also due to the referees for
valuable comments and suggestions.
\end{ack}



\def\cprime{$'$}
\providecommand{\bysame}{\leavevmode\hbox
to3em{\hrulefill}\thinspace}
\providecommand{\MR}{\relax\ifhmode\unskip\space\fi MR }
\providecommand{\MRhref}[2]{%
  \href{http://www.ams.org/mathscinet-getitem?mr=#1}{#2}
} \providecommand{\href}[2]{#2}


\end{document}